
\documentclass[reqno]{amsart}
\usepackage{amsfonts,amsmath,amssymb,enumerate}

\newtheorem{theorem}{Theorem}[section]
\newtheorem{lemma}[theorem]{Lemma}
\newtheorem{corollary}[theorem]{Corollary}

\theoremstyle{definition}

\newtheorem{remark}[theorem]{Remark}

\numberwithin{equation}{section}
\newcommand{\etext}[1]{\quad\emph{#1}\quad}
\newcommand{\itext}[1]{\quad\text{#1}\quad}


\newcommand{\8}{\infty}

\newcommand{\bL}{\mathbb L}
\newcommand{\bR}{\mathbb R}
\newcommand{\bS}{\mathbb S}
\newcommand{\bZ}{\mathbb Z}
\newcommand{\con}{{\rm const}}

\newcommand{\eq}{\equiv}
\newcommand{\er}{\eqref}

\newcommand{\il}{\int\limits}
\newcommand{\iil}{\iint\limits}
\newcommand{\ol}{\overline}

\newcommand{\pa}{\partial}

\newcommand{\rn}{{\mathbb R}^n}

\newcommand{\sign}{{\rm sign\,}}
\newcommand{\su}{\subset}

\newcommand{\tl}{\tilde}
\newcommand{\ve}{\varepsilon}

\DeclareMathOperator*{\osc}{osc}

\author{Gong Chen}
\address{Department of Mathematics, University of Chicago,
Chicago, Illinois 60637, USA}
\email{gc(at)uchicago.edu}

\author{Mikhail Safonov}
\address{School of Mathematics, University of Minnesota,
Minneapolis, Minnesota 55455, USA}
\email{safonov(at)math.umn.edu}

\begin{document}

\title{On Second Order Elliptic and Parabolic Equations of Mixed Type}

\keywords{Harnack inequality, qualitative properties of solutions,
equations with measurable coefficients, homogenization}
\subjclass[2010]{Primary 35B27, 35B65; Secondary 35K10, 35M10}

\begin{abstract}
It is known that solutions to second order uniformly elliptic and parabolic equations, either in divergence or nondivergence (general) form, are H\"{o}lder continuous and satisfy the interior Harnack inequality. We show that even in the one-dimensional case ($x\in\bR^1$), these properties are not preserved for equations of mixed divergence-nondivergence structure: for elliptic equations.
\begin{equation*}
    D_i(a^1_{ij}D_ju)+a^2_{ij}D_{ij}u=0,
\end{equation*}
and parabolic equations
\begin{equation*}
    p\pa_t u=D_i(a_{ij}D_ju),
\end{equation*}
where $p=p(t,x)$ is a bounded strictly positive function. The H\"{o}lder continuity and Harnack inequality are known if $p$ does not depend either on $t$ or on $x$. We essentially use homogenization techniques in our construction. Bibliography: 23 titles.
\end{abstract}

\maketitle

\section{Introduction}\label{S1}

Leaving aside the elliptic and parabolic equations with ``regular'' coefficients, and also the cases of lower dimension, the H\"{o}lder regularity of solutions was first proved in 1957 by De Giorgi \cite{DG} for uniformly elliptic equations, and soon afterwards by Nash \cite{N} for more general uniformly parabolic equations in the {\em divergence form}
\begin{equation}\label{D}\tag{D}
    Lu:=-\pa_t u+D_i\big(a_{ij}D_ju\big)=0,
\end{equation}
where $\pa_t u:=\pa u/\pa t,\; D_i:=\pa/\pa x_i\,$ for $i=1,2,\ldots, n$, and the equation is understood in the integral sense, i.e. $u$ is a {\em weak solution} of \er{D}. Here and throughout the paper, we assume the summation convention over repeated indices $i,j$ from $1$ to $n$. The coefficients $a_{ij}=a_{ij}(t,x)$ are real Borel measurable functions satisfying the {\em uniform parabolicity condition}
\begin{equation}\label{U}\tag{U}
    a_{ij}\xi_i\xi_j\ge \nu\,|\xi|^2
    \itext{for all}\xi\in\bR^n,
    \itext{and}\sum_{i,j}a_{ij}^2\le\nu^{-2},
\end{equation}
with $\nu=\con\in(0,1]$. The elliptic equations $D_i\big(a_{ij}D_ju\big)=0$ can be formally considered as a particular case of \er{D}, in which $a_{ij}$ and $u$ do not depend on $t$, so that $\pa_t u=0$.
\smallskip

The interior Harnack inequality was first proved in 1961 by Moser \cite{M1} for elliptic equation, and then in 1964 \cite{M2}
for more general parabolic equations \er{D} (see Theorem \ref{T1.1} below).
\smallskip

The Harnack inequality together with H\"{o}lder regularity of solutions to uniformly parabolic equations in the {\em nondivergence form}
\begin{equation}\label{ND}\tag{ND}
    Lu:=-\pa_t u+a_{ij}D_{ij}u=0
\end{equation}
was proved much later, at the end of 1970th, by Krylov and Safonov \cite{KS}. See \cite{K, GT, Li, NU} and references therein for further history, generalizations to equations with unbounded lower order terms, and various applications. The method in \cite{KS} is based on some variants of {\em growth lemmas}, which were originally introduced by Landis \cite{La}.
\smallskip

In its simplest formulation (without involving lower order terms), these results for parabolic case can be summarized as follows. In the elliptic case, one can simply drop the dependence on $t$: $a_{ij}=a_{ij}(x)$ and $u=u(x)$.

\begin{theorem}[Harnack inequality]\label{T1.1}
    For $y\in\bR^n, Y:=(s,y)\in \bR^{n+1}$, and $r>0$, denote
    \begin{equation}\label{1.1}
    B_r(y):=\{x\in\bR^n:\; |x-y|<r\},\qquad
    C_r(Y):=(s-r^2,s)\times B_r(y).
    \end{equation}
    Let $u$ be a non-negative solution of \er{D} or \er{ND} in $C_{2r}(Y)$. Then
    \begin{equation}\label{1.2}
        \sup_{C_r(Y_r)}u\le N\cdot \inf_{C_r(Y)}u,
        \etext{where} Y_r:=(s-2r^2,y),
    \end{equation}
    and the constant $N>1$ depends only on $n$ and $\nu$ in \er{U}.
\end{theorem}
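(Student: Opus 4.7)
The plan is to handle the divergence and nondivergence cases by different techniques, even though the conclusions in \er{1.2} have the same form.

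For equation \er{D}, I would follow Moser's scheme. Using Caccioppoli-type energy estimates with test functions of the form $\eta^2 u^q$ (with $\eta$ a smooth cutoff in a parabolic cylinder and $q\in\bR$), together with the parabolic Sobolev embedding, one obtains reverse H\"{o}lder inequalities that can be iterated over shrinking cylinders to yield a local maximum principle of the form $\sup u\le N\bigl(r^{-n-2}\!\int u^p\bigr)^{1/p}$ for every $p>0$. Applying the same iteration to $u^{-1}$, which is a nonnegative subsolution of a similar equation, gives a dual bound $\inf u\ge N^{-1}\bigl(r^{-n-2}\!\int u^{-p}\bigr)^{-1/p}$. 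The bridge between these two is the weak Harnack inequality, which Moser derives by applying the parabolic John--Nirenberg lemma to $\log u$: this shows $\log u$ has bounded mean oscillation in parabolic cylinders, so that the $L^p$ norm of $u$ on a forward cylinder and the $L^{-p}$ norm on a backward cylinder are comparable for some small $p>0$. It is precisely here that the time shift $Y_r=(s-2r^2,y)$ in \er{1.2} enters.

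For equation \er{ND}, I would follow the Krylov--Safonov method. The main analytic tool is the Aleksandrov--Bakelman--Pucci maximum principle, which controls the supremum of a subsolution by the $L^{n+1}$ norm of the right-hand side on the upper contact set. From ABP one extracts a measure-theoretic growth lemma: if a nonnegative supersolution of \er{ND} in a parabolic cylinder is at least $1$ on a subset of positive relative measure, then $u$ is bounded below by a positive constant on a suitably enlarged, time-shifted cylinder. Iterating this growth lemma through a Calder\'{o}n--Zygmund-type decomposition of parabolic cylinders produces a power decay estimate for the distribution function of $u$, which is the weak $L^p$ Harnack inequality for some small $p>0$. Combined with the local maximum principle for subsolutions (again derived from ABP), this yields \er{1.2}. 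The elliptic statements follow by suppressing the $t$-dependence in either argument.

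The principal obstacle in both approaches is correctly organizing the parabolic cylinders so that the time shift by $2r^2$ is respected at every step. A Harnack inequality in which $\sup$ and $\inf$ are taken over the same cylinder is false, as the heat kernel already shows: positivity propagates forward in time, so the supremum must sit on an earlier slab than the infimum. Tracking the covering, measure estimates, and the waiting time $2r^2$ coherently through the Moser iteration (resp.\ the Krylov--Safonov covering argument), so that the intermediate estimates compose into the single clean statement \er{1.2}, is the most delicate bookkeeping issue. The uniform constants $N$ depend only on $n$ and $\nu$ because every estimate above is invariant under the parabolic scaling $(t,x)\mapsto(r^2 t, rx)$ and depends on the coefficients only through the ellipticity constants in \er{U}.
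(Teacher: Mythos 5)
The paper does not prove Theorem \ref{T1.1}: it is stated as a classical result, with the remark that ``the proof of these theorems, separately in the cases \er{D} and \er{ND}, is contained in the above mentioned references'' (Moser \cite{M1,M2} for \er{D}, Krylov--Safonov \cite{KS} for \er{ND}, with \cite{K,GT,Li,NU} for further exposition). Your sketch correctly reproduces exactly those two lines of argument — Moser iteration plus John--Nirenberg for the divergence case, and ABP plus the growth-lemma/Calder\'on--Zygmund machinery for the nondivergence case — so it takes essentially the same route as the sources the paper points to, and the bookkeeping remark about the forward-in-time shift $2r^2$ is also accurate.
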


\begin{theorem}[Oscillation estimate]\label{T1.2}
    Let $u$ be a bounded solution (not necessarily non-negative) of \er{D} or \er{ND} in $C_{2r}(Y)$ for some $Y\in \bR^{n+1}$ and $r>0$. Then
    \begin{equation}\label{1.3}
    \osc_{C_r}u:=\sup_{C_r}u-\inf_{C_r}u
    \le \theta \cdot \osc_{C_{2r}}u
    \etext{with a constant}
    \theta=\theta(n,\nu)\in (0,1),
    \end{equation}
    where $C_r:=C_r(Y)$.
\end{theorem}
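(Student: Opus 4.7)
The plan is to deduce the oscillation estimate from the Harnack inequality (Theorem \ref{T1.1}) by the classical subtraction trick: since constants solve both \er{D} and \er{ND}, the equation is invariant under adding constants, so $u$ can be shifted to produce non-negative solutions.

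Set $M:=\sup_{C_{2r}}u$, $m:=\inf_{C_{2r}}u$, and $\omega:=\osc_{C_{2r}}u=M-m$; the assertion is trivial if $\omega=0$, so assume $\omega>0$. Then $v:=u-m$ and $w:=M-u$ are non-negative solutions of the same equation on $C_{2r}(Y)$. Since their suprema over the lower cylinder $C_r(Y_r)$ satisfy
\[
\sup_{C_r(Y_r)}v+\sup_{C_r(Y_r)}w=\omega+\osc_{C_r(Y_r)}u\ge\omega,
\]
at least one of them is $\ge\omega/2$; call the winning function $\tl u$.

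Applying Theorem \ref{T1.1} to $\tl u\ge 0$ on $C_{2r}(Y)$ yields
\[
\omega/2\le\sup_{C_r(Y_r)}\tl u\le N\inf_{C_r(Y)}\tl u,
\]
hence $\inf_{C_r(Y)}\tl u\ge\omega/(2N)$. Translating back to $u$: either $\inf_{C_r}u\ge m+\omega/(2N)$ or $\sup_{C_r}u\le M-\omega/(2N)$, and in either case $\osc_{C_r}u\le\omega-\omega/(2N)=(1-1/(2N))\omega$, which is the desired conclusion with $\theta:=1-1/(2N)\in(0,1)$.

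No serious obstacle is anticipated: this is a textbook deduction from Harnack. The only feature worth noting is the asymmetry in Theorem \ref{T1.1}, where the supremum is taken on the time-shifted lower cylinder $C_r(Y_r)$ while the infimum is on the later $C_r(Y)$; this forces us to measure the reference oscillation on the enveloping cylinder $C_{2r}(Y)$, which contains both $C_r(Y_r)$ and $C_r(Y)$.
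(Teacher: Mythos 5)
Your argument is correct. Note, however, that the paper itself does not prove Theorem \ref{T1.2}: immediately after stating it, the authors write that the proofs of Theorems \ref{T1.1} and \ref{T1.2} ``separately in the cases \er{D} and \er{ND}, is contained in the above mentioned references'' (Moser, Krylov--Safonov, etc.). So there is no in-paper proof to compare against. What you have done is derive the oscillation estimate from the Harnack inequality (Theorem \ref{T1.1}) by the classical subtraction device, and that derivation is exactly the standard one used in those references. The key points are all handled correctly: $v=u-m$ and $w=M-u$ are non-negative solutions (since constants solve both \er{D} and \er{ND}); for any point $P$ in $C_r(Y_r)$ one has $v(P)+w(P)=\omega$, so $\sup_{C_r(Y_r)}v+\sup_{C_r(Y_r)}w\ge\omega$ and at least one of the two is $\ge\omega/2$; applying Theorem \ref{T1.1} to that one gives a positive lower bound $\omega/(2N)$ on its infimum over $C_r(Y)$, which in either case removes $\omega/(2N)$ from the oscillation of $u$ over $C_r(Y)$; and $\theta=1-1/(2N)\in(0,1)$ depends only on $n,\nu$ because $N=N(n,\nu)$. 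You also correctly flagged the time-asymmetry of the parabolic Harnack inequality (supremum on the earlier cylinder $C_r(Y_r)$, infimum on the later one $C_r(Y)$), which is precisely why the enlarged cylinder $C_{2r}(Y)$ containing both must serve as the reference set.
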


The proof of these theorems, separately in the cases \er{D} and \er{ND}, is contained in the above mentioned references.

\begin{corollary}[H\"{o}lder estimate]\label{C1.3}
    Let $u$ be a bounded solution of \er{D} or \er{ND} in a cylinder $C_r:=C_r(Y)$ for some $Y\in \bR^{n+1}$ and $r>0$. Then
    \begin{equation}\label{1.4}
    \omega_{\rho}:=\osc_{C_{\rho}}u
    \le\Big(\frac{2\rho}{r}\Big)^{\alpha}\omega_r
    \etext{for} 0<\rho\le r,
    \end{equation}
    where $\alpha =\alpha(n,\nu):=-\log_2\theta>0,\quad\theta \in(0,1)$ is the constant in \er{1.3}.
\end{corollary}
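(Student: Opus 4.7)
The plan is to iterate the oscillation estimate of Theorem \ref{T1.2} across dyadic scales and then interpolate between them. First, note that for every integer $k\ge 1$ the cylinder $C_{2(r/2^k)}(Y) = C_{r/2^{k-1}}(Y)$ is contained in $C_r(Y)$, on which $u$ is by hypothesis a bounded solution of \er{D} or \er{ND}. Applying Theorem \ref{T1.2} with $r$ replaced by $r/2^k$ therefore yields
\begin{equation*}
    \omega_{r/2^k} \le \theta\cdot \omega_{r/2^{k-1}},
\end{equation*}
and induction on $k$ produces the geometric decay $\omega_{r/2^k} \le \theta^k\, \omega_r$ for all $k\ge 0$.

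Next, for an arbitrary $\rho\in(0,r]$, I would select the unique integer $k\ge 0$ satisfying $r/2^{k+1} < \rho \le r/2^k$ (taking $k=0$ when $\rho>r/2$). Monotonicity of the oscillation under set inclusion, together with the previous step, yields $\omega_\rho \le \omega_{r/2^k}\le \theta^k\,\omega_r$.

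Finally, I would convert $\theta^k$ into the claimed power of $\rho/r$. Since $\alpha = -\log_2\theta$, i.e.\ $\theta = 2^{-\alpha}$, the inequality $2^{k+1}>r/\rho$ gives $\theta^{k+1}=2^{-\alpha(k+1)}<(\rho/r)^\alpha$; dividing by $\theta=2^{-\alpha}$ produces $\theta^k<2^\alpha (\rho/r)^\alpha=(2\rho/r)^\alpha$. Combining this with the bound from the preceding paragraph delivers \er{1.4}. This is a standard iteration argument, and I do not expect any genuine obstacle; the only point requiring care is tracking the dyadic domain inclusion $C_{r/2^{k-1}}(Y)\subset C_r(Y)$ so that Theorem \ref{T1.2} remains applicable at every scale.
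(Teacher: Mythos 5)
Your proof is correct and follows essentially the same dyadic iteration argument as the paper: both iterate the oscillation estimate \eqref{1.3} across dyadic scales, use monotonicity of $\omega_\rho$, and convert the geometric decay $\theta^k$ to the power $(2\rho/r)^\alpha$. The only cosmetic difference is that you iterate downward from $r$ while the paper iterates upward from $\rho$; the arithmetic in the final conversion step is equivalent.
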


\begin{proof}
    Using the estimate \er{1.3} with $\rho \in (0,r/2]$ in place of $r$, we get
    \[ \omega_{\rho}\le \theta \omega_{2\rho}
    \itext{for} 0<2\rho\le r.\]
    For each $\rho\in(0,r]$, there is a unique integer $k\ge 0$ such that $2^{-k-1}r<\rho\le 2^{-k}r$. Iterating the previous inequality and using monotonicity of $\omega_{\rho}$, we get
    \[ \omega_{\rho} \le \theta\omega_{2\rho} \le\cdots\le
    \theta^k\omega_{2^k\rho}
    \le \theta^k\omega_r.\]
    Finally, taking
    $\alpha:=-\log_2\theta>0$, we obtain
    \[ \theta^k=\big(2^{-k}\big)^{\alpha}
    \le\Big(\frac{2\rho}{r}\Big)^{\alpha},\]
    which yields the desired estimate \er{1.4}.
\end{proof}

More recently, Ferretti and Safonov \cite{FS} tried to develop some ``unifying'' techniques which would equally applicable to equations in both \er{D} and \er{ND} forms. They found out the the growth lemmas can serve as a common ground for the proof of the Harnack inequality and other related facts, though the methods of their proof are completely different in these two cases. A natural question arises, whether or not Theorems \ref{T1.1} and \ref{T1.2} hold true, with constants independent on the smoothness of the coefficients, for solutions of mixed type elliptic equations of ``mixed'' type
\begin{equation}\label{1.5}
    D_i(a^1_{ij}D_ju)+a^2_{ij}D_{ij}u=0,
\end{equation}
where matrix functions $a^1_{ij}$ and $a^2_{ij}$ satisfy \er{U}. It was shown \cite[Example 1.7]{FS} by direst calculation that this is not true even in the one-dimensional case, when \er{1.5} is reduced to an ordinary differential equation with highly oscillating coefficients.
\smallskip

In Section \ref{S2} we discuss this phenomenon from the point of view of homogenization theory. Namely, we use the fact that solutions of differential equations with periodic coefficients can be approximated by solutions of equations with constant coefficients as the period tends to $0$. For this purpose, we need to take into consideration periodic solutions to the adjoint equation $L^*v=0$. As a variant of the Fredholm alternative, it is known
(see \cite[Part II, Ch. 3]{BJS}, that in the class of smooth periodic functions, the elliptic equation (possibly of higher order) $Lu=f$ is solvable if and only if $f$ is orthogonal in $\bL^2$ to any nontrivial solution to  $L^*v=0$, i.e. $(f,v)=0$, and moreover, the homogeneous equations $Lu=0$ and $L^*v=0$ have same number of linearly independent solutions. For elliptic equations of second order,
\begin{equation}\label{1.6}
    Lu:=a_{ij}D_{ij}u+b_iD_iu=0,
\end{equation}
from the strong maximum principle it follows that any periodic solution must be constant. Moreover, periodic solutions to  $L^*v=0$ cannot change sign, because otherwise we could choose a positive periodic function $f$ such that $(f,v)=0$. Then we must have a periodic solution to the equation $Lu=f>0$, which is impossible by the strong maximum principle.
\smallskip

In Section \ref{S3}, we extend the Fredholm alternative to second order parabolic equations in the class of functions which are periodic both in $t$ and $x$. Finally, in Section \ref{S4}, we show that even for $x\in\bR^1$, the solutions of the ``mixed'' parabolic equations
\begin{equation}\label{1.7}
    Lu:=-p\pa_t u+D_i(a_{ij}D_ju)=0
\end{equation}
with $p=p(t,x)\in [\nu,\nu^{-1}]$ do not satisfy the Harnack inequality \er{1.2} and H\"{o}lder estimate \er{1.4} with constants independent on the smoothness of coefficients. Note that if $p=p(t)$ does not depend on $x$, then one can simply divide both sides of \er{1.7} by $p$ and replace $a_{ij}$ by $a_{ij}/p$, so that the equation \er{1.7} is reduced to the standard form \er{D}. It is also known from \cite{PE}, see also \cite{FS}, that the Harnack inequality and other related facts are true if $p=p(x)$ does not depend on $t$. One of the key observations here is that the function
    \[ I(t):=\il_{\bR^n}p(x)\,u(t,x)\,dx\]
under reasonable assumptions (which allow differentiation and application of the divergence theorem) satisfies
    \[ I'(t)=\il_{\bR^n}p\,\pa_tu\,dx
    =\il_{\bR^n}D_i(a_{ij}D_ju)\,dx=0,\]
so that $I(t)\equiv I(0)$ for $t\ge 0$. This argument does not work for $p=p(t,x)$.
\smallskip

We essentially use homogenization technique which was introduced by M. I. Freidlin \cite{F} in 1964 form the probabilistic point of view. In the  analytic setting, our method can be considered as an application of the {\em method of asymptotic expansions} (see \cite[Sec. 1.2]{BLP} or \cite[Sec. 1.4]{JKO}, and references therein). More recently, a similar technique was used in a more difficult (non-periodic) case by
N. Nadirashvili \cite{Nd} (see also \cite{S}) for the proof of non-uniqueness of weak solutions to second order elliptic equations $\,a_{ij}D_{ij}u=0\,$ with measurable coefficients $\,a_{ij}$.
\smallskip

In the parabolic case, our approach to the existence of time-periodic solutions is based on a Fredholm type argument, which was also discussed by G. Lieberman \cite{Li1} for more general equations and domains. For our purposes, it is technically simpler that an alternative approach based on the Krein-Rutman theorem (see, e.g. \cite{H}).

We assume that all the functions $a_{ij}, b_i, u, v$, etc., are smooth enough, so that all the derivatives in our formulations and proofs are understood in the classical sense. Our goal is to show that for solutions of the equations \er{1.5} and \er{1.7}, the Harnack inequality, and in fact even the estimates for the modulus of continuity, are in general impossible with constants independent on the smoothness of coefficients.
\smallskip

BASIC NOTATIONS:
\begin{enumerate}[]\itemsep3pt
{\setlength\itemindent{8pt}
\item $x=(x_1,x_2,\ldots,x_n)$ are vectors or points in $\rn$,
    $\;|x|$  is  the length of $x\in\rn$.
\item The balls $B_r(y)$ and cylinders $C_r(Y)$ are defined in \er{1.1}.
\item In Sections \ref{S2} and \ref{S4}, $\bS$ stands for the set of all smooth $1$-periodic functions.
\item The notation $A:=B$, or $B=:A$ means ``$A=B$ by definition''. Throughout the paper, $N$ (with indices or without) denotes different constants depending only on the prescribed quantities such that $n,\nu,$ etc.
    This dependence is indicated in the parentheses: $N=N(n,\nu,\ldots)$.
}
\end{enumerate}

\section{Elliptic equations of mixed type}\label{S2}

In this section, $\bS$ denotes the set of all smooth $1$-periodic functions on $\bR^1$. In the one-dimensional case, the Fredholm alternative for elliptic operators $L$ in \er{1.6} is contained in Theorem \ref{T2.2} below. Part (II) of this theorem can also be considered as a particular case of Theorem \ref{T3.1} in the next section, when there is no dependence on $t$. We still give a direct elementary proof, because some of its details, such as Corollary \ref{C2.4}, are used below. We precede it with a simple lemma.

\begin{lemma}\label{L2.1}

(I) For arbitrary continuous functions $\,a=a(x)>0\,$ and $\,b=b(x)\,$ on $\bR^1$,
the function
\begin{equation}\label{2.1}
    u_0(x):=\il_0^x e^{-F(y)}dy,\etext{where}
    F(y):=\il_0^y \frac{b(z)}{a(z)}\,dz
\end{equation}
satisfies $\,Lu_0:=au_0''+bu_0'=0;\quad
u_0(0)=0,\quad u_0'(0)=1$.
\smallskip

(II) In addition to the functions $a$ and $b$ in the previous part  (I), let a continuous function $f=f(x)$ on $\bR^1$ be given. Then the function
\begin{equation}\label{2.2}
    U(x):=\il_0^x e^{-F(y)}\il_0^y \frac{e^{F(z)}f(z)}{a(z)}\,dz\,dy,
\end{equation}
satisfies $\,LU:=aU''+bU'=f;\quad
U(0)=U'(0)=0$.
\end{lemma}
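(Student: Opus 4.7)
The plan is to verify both parts by direct differentiation, using the fundamental theorem of calculus. The underlying heuristic (not needed in the formal verification, but useful for motivation) is the classical integrating-factor method: dividing the equation $Lu = au''+bu' = f$ by the positive coefficient $a$ yields $u'' + (b/a)u' = f/a$, for which $e^{F(x)}$ is an integrating factor, giving $(e^F u')' = e^F f/a$. Integrating twice and imposing $u(0)=u'(0)=0$ (respectively $u(0)=0,\,u'(0)=1$ in the homogeneous case) produces exactly the formulas \er{2.1} and \er{2.2}.

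For part (I), I would differentiate \er{2.1} once via the fundamental theorem of calculus to get $u_0'(x) = e^{-F(x)}$, which immediately yields the initial conditions $u_0(0) = 0$ and $u_0'(0) = e^{-F(0)} = 1$, since $F(0)=0$. Differentiating once more gives $u_0''(x) = -F'(x)\,e^{-F(x)} = -\tfrac{b(x)}{a(x)}\,e^{-F(x)}$. Substituting into $Lu_0 = au_0'' + bu_0'$ cancels the two terms, proving $Lu_0 \equiv 0$.

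For part (II), I would proceed analogously. Differentiating \er{2.2} yields
\[
 U'(x) = e^{-F(x)}\il_0^x \frac{e^{F(z)}f(z)}{a(z)}\,dz,
\]
so that $U(0) = U'(0) = 0$. A second differentiation, using the product rule on the outer $e^{-F(x)}$ factor and the FTC on the integral, gives
\[
 U''(x) = -\frac{b(x)}{a(x)}\,U'(x) + \frac{f(x)}{a(x)}.
\]
Multiplying by $a(x)$ and adding $b(x)U'(x)$ yields $LU = f$ as required.

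No genuine obstacle appears: positivity and continuity of $a$ ensure $F$ is well-defined and $C^1$, and the assumed continuity of $a,b,f$ guarantees that all derivatives above exist in the classical sense. The proof is essentially a bookkeeping exercise once the correct antiderivatives have been written down.
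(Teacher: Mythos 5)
Your verification is correct, and it is essentially the argument the authors have in mind: the paper states Lemma~\ref{L2.1} without proof, treating it as the elementary integrating-factor computation, and your direct differentiation via the fundamental theorem of calculus is exactly that computation carried out explicitly. Nothing is missing; the continuity of $a,b,f$ and positivity of $a$ justify every step as you note.
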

\smallskip

Note that the functions $u_0$ and $U$ in this lemma are not $1$-periodic in general.
\medskip

\begin{theorem}[Fredholm alternative]\label{T2.2}
Consider the linear differential equation
\begin{equation}\label{2.3}
    Lu:=au''+bu'=f
\end{equation}
with $a,b,f\in\bS$, where $\,a=a(x)>0$, and its formally adjoint homogeneous equation
\begin{equation}\label{2.4}
    L^*v=(av)''-(bv)'=0.
\end{equation}
We have the following properties.
\smallskip

(I) The equation \er{2.4} has a nontrivial solution $v\in\bS$ which is unique up to a multiplicative constant. Moreover, $v(x_0)=0$ at some point $x_0$ if and only if $v\eq 0$. The solution is uniquely defined by the normalization condition
\begin{equation}\label{2.5}
    \il_0^1v(x)\,dx=1.
\end{equation}
\smallskip

(II) The equation \er{2.3} has a solution in $\bS$ if and only if
\begin{equation}\label{2.6}
    f^0:=(f,v):=\il_0^1fv\,dx=0,
\end{equation}
where $v\in\bS$ satisfies \er{2.4}--\er{2.5}. The solution is unique up to an additive constant.
\end{theorem}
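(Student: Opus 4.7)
My plan is to integrate the adjoint equation once, reducing part (I) to an explicit first-order calculation, and then to exploit the resulting first integral to drive the construction in part (II).

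For part (I), a single integration of $L^*v=(av)''-(bv)'=0$ produces the first-order linear ODE
\begin{equation*}
    (av)' - bv = C_0, \qquad C_0 \in \bR,
\end{equation*}
which is solved by the integrating factor $a(x)e^{-F(x)}$ (with $F$ from Lemma~\ref{L2.1}) to give the explicit formula
\begin{equation*}
    v(x) = \frac{e^{F(x)}}{a(x)}\bigl[a(0)v(0) + C_0 u_0(x)\bigr].
\end{equation*}
Imposing $v(1)=v(0)$ gives one linear relation between the parameters $C_0$ and $v(0)$, cutting the two-dimensional ODE-solution space down to a one-dimensional space of periodic solutions. A direct sign inspection of the bracket $a(0)v(0)+C_0 u_0(x)$, splitting into the cases $F(1)=0$ (where the relation forces $C_0=0$) and $F(1)\neq 0$ (where $v(0)$ becomes proportional to $C_0$) and using that $u_0$ is strictly increasing on $[0,1]$ with $u_0(0)=0$, then shows that every nontrivial periodic $v$ has constant sign on $[0,1]$. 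This yields both $v(x_0)=0 \Rightarrow v\equiv 0$ and $\int_0^1 v\,dx\neq 0$, so the normalization \eqref{2.5} selects a unique $v \in \bS$.

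For part (II), expanding $((av)u')'$ by the Leibniz rule and substituting $(av)'=bv+C_0$ from the first integral yields the key identity
\begin{equation*}
    \bigl((av)u' - C_0 u\bigr)' = v \cdot Lu.
\end{equation*}
Taking $Lu=f$ and integrating over $[0,1]$, the left side vanishes by periodicity of $u$, proving the necessity $(f,v)=0$. For sufficiency, assume $(f,v)=0$ and set $\Phi(x) := \int_0^x v f \,dy$, which is then $1$-periodic; I would construct $u$ as a periodic solution of the first-order linear ODE
\begin{equation*}
    (av)u' - C_0 u = \Phi + K
\end{equation*}
for a suitable $K\in\bR$. Differentiating this relation and using the first integral shows that any such $u$ automatically satisfies $v(Lu-f)=0$, hence $Lu=f$ since $v$ has no zeros. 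The standard periodic solvability criterion for a first-order linear ODE with periodic coefficients --- applied using that $v$ is of constant sign, so that $\int_0^1 1/(av)\,dx \neq 0$ --- then yields a periodic $u$ for every $K$ when $C_0\neq 0$, and for a uniquely determined $K$ when $C_0=0$. Uniqueness of $u$ up to an additive constant follows because the only $\bS$-periodic solutions of $Lu=0$ are constants ($u_0$ being strictly monotone, hence not periodic).

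The main technical obstacle is the sign analysis in part (I): one must carefully separate the cases based on the value of $F(1)$ to conclude that the bracket $a(0)v(0)+C_0 u_0(x)$ does not change sign on $[0,1]$ once periodicity is enforced. Everything else reduces to bookkeeping around the first integral of $L^*v=0$ and standard first-order ODE theory.
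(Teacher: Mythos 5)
Your proof is correct; part~(I) and the necessity direction of part~(II) follow the paper's plan, while the sufficiency direction of part~(II) uses a genuinely different construction. For part~(I), both you and the paper integrate \eqref{2.4} once to a first-order ODE, solve explicitly via an integrating factor, and impose $v(1)=v(0)$; the difference is only in the sign argument, where the paper observes directly from the explicit formula that $v(x_0)=v(x_0+1)=0$ forces the constant ($c$, your $C_0$) to vanish and hence $v\equiv 0$, applied at an arbitrary $x_0$ by periodicity, whereas you do a case split on $F(1)$ — both work, the paper's observation is a touch slicker. (Note that your first integral $(av)'-bv=C_0$ is the literal once-integrated form of \eqref{2.4}; the paper's \eqref{2.7} reads $(av)'+bv=c$, consistent with \eqref{2.8} under the opposite sign convention on $b$, so watch the signs if you cross-reference.) Your Lagrange identity $\bigl((av)u'-C_0 u\bigr)'=vLu$ is exactly the paper's $w'=vLu$ with $w=u'av-(av)'u+buv$ after substituting the first integral, so the necessity argument is identical. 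For sufficiency, the paper takes a particular non-periodic solution $u=U+\lambda u_0$ of $Lu=f$ from Lemma~\ref{L2.1}, chooses $\lambda$ so $u(0)=u(1)=0$, and then uses $w(0)=w(1)$ to match $u'(0)=u'(1)$ and conclude periodicity by ODE uniqueness; you instead set $\Phi:=\int_0^x vf\,dy$ (periodic precisely because $(f,v)=0$) and construct $u$ as a periodic solution of the \emph{first-order} ODE $(av)u'-C_0u=\Phi+K$, then differentiate to recover $Lu=f$. This is a real structural difference: your route never solves a second-order equation, it exhibits the role of the first integral more transparently, and the periodic solvability reduces to elementary first-order theory (a case split on whether $C_0$ vanishes, using $\int_0^1 1/(av)\,dx\ne 0$). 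The price is a slightly longer bookkeeping step to justify solvability in both cases $C_0=0$ and $C_0\ne0$, where the paper's second-order construction handles both uniformly. Your uniqueness argument (strict monotonicity of $u_0$) is also more elementary than the paper's appeal to the maximum principle, and equally valid.
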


\begin{proof}
    (I) Obviously, the equation \er{2.4} is equivalent to
    \begin{equation}\label{2.7}
    (av)'+bv=c=\con.
    \end{equation}
For a given initial value $v(0)$ the unique solution to this equation on $\bR^1$ is given by the expression
\begin{equation}\label{2.8}
    a(x)v(x)=e^{-F(x)}
    \bigg[a(0)v(0)+c\il_0^xe^{F(y)}dy\bigg],
\end{equation}
where the function $F$ is defined in \er{2.1}. There is a unique choice of the constant $\,c\,$ which guarantees the equality $v(1)=v(0)$. In this case, since $a,b,f$ are $1$-periodic, both functions $v(x)$ and $v(x+1)$ satisfy \er{2.7} with the same initial condition at the point $x=0$. By uniqueness for the Cauchy (initial value) problem, we have
\begin{equation}\label{2.9}
    v(x+1)\eq v(x)\itext{on}\bR^1,
    \itext{i.e.} v\in\bS.
\end{equation}
Moreover, from $\,v(0)=v(1)=0$ it follows $c=0$ and $v\eq 0$.
By periodicity, same is true with any point $x_0\in\bR^1$ in place of $0$: from $\,v(x_0)=v(x_0+1)=0$ it follows  $v\eq 0$.
In particular, any non-trivial solution $v$ cannot change sign.
Finally, multiplying $v$ by an appropriate constant, we get a solution $v\in\bS$ satisfying \er{2.5}.
\smallskip

(II) For an arbitrary smooth function $u$, integrating by parts, or equivalently, using the identity
\begin{equation}\label{2.10}
    w'=vLu-uL^*v=vLu,\itext{where}
    w:=u'av-(av)'u+buv,
\end{equation}
we obtain
\begin{equation}\label{2.11}
    (Lu,v):=\il_0^1vLu\,dx=w(1)-w(0).
\end{equation}
Therefore, if $u\in\bS$ satisfies $Lu=f$, then by periodicity, $w(1)=w(0)$, hence $(f,v)=(Lu,v)=0$.
\smallskip

Now suppose that $f\in\bS$ satisfies $(f,v)=0$. We take
$u:=U+\lambda u_0$, where $u_0$ and $U$ are defined in \er{2.1}--\er{2.2}, and a constant $\lambda$ is chosen in such a way that $u(1)=0$. Then
    \[Lu=f\itext{and} u(0)=u(1)=0.\]
From $(f,v)=0$ and \er{2.11} it follows $w(0)=w(1)$, which in turn implies $u'(0)=u'(1)$. Therefore, the functions $u(x)$ and $u(x+1)$ satisfy the equation $Lu=f$ of second order with same initial conditions for $u$ and $u'$ at the point $x=0$. Similarly to \er{2.9},
\begin{equation}\label{2.12}
    u(x+1)\eq u(x)\itext{on}\bR^1,
    \itext{i.e.} u\in\bS.
\end{equation}

The existence of a solution $u\in\bS$ of $Lu=f$ is proved. The uniqueness up to an additive constant follows from the fact that the difference $\tl{u}:=u_1-u_2$ of two solutions of $Lu=f$ satisfies the homogeneous equation $L\tl{u}=0$. Since $\tl{u}$ is also periodic, by the maximum principle we must have $\tl{u}=u_1-u_2=\con$. Theorem is proved.
\end{proof}

\begin{corollary}\label{C2.3}
    For an arbitrary $f\in\bS$, there exists a unique, up to an additive constant, solution in $\bS$ of the equation
    \begin{equation}\label{2.13}
    Lu=f-f^0,\etext{where}
    f^0:=(f,v),
    \end{equation}
    and $v\in\bS$ is defined by \er{2.4}--\er{2.5}.
\end{corollary}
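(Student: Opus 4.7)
The plan is to reduce this corollary directly to Theorem \ref{T2.2}(II) by verifying the solvability condition \eqref{2.6} for the right-hand side $g := f - f^0$, where $f^0 = (f,v)$ is a constant.

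First I would note that $g \in \bS$ since $f \in \bS$ and $f^0$ is just a scalar (so subtracting it preserves smoothness and $1$-periodicity). Then I would compute the pairing against the adjoint solution $v$:
\[ (g, v) = (f, v) - f^0 \il_0^1 v\,dx = f^0 - f^0 \cdot 1 = 0, \]
where the second equality uses the definition $f^0 = (f,v)$ and the normalization \er{2.5}. Thus the Fredholm solvability condition from Theorem \ref{T2.2}(II) is satisfied for the equation $Lu = g$.

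Existence of a solution $u \in \bS$ to \er{2.13}, as well as its uniqueness up to an additive constant, then follow immediately from Theorem \ref{T2.2}(II). There is essentially no obstacle here: the entire content of the corollary is the observation that subtracting the mean $f^0$ (with respect to the adjoint invariant measure $v\,dx$) is exactly the projection that enforces the solvability condition. I would write this as a short three- or four-line proof with no further computation required.
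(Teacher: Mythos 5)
Your proof is correct and is exactly the argument the paper leaves implicit (the corollary is stated without proof): verify that $(f - f^0, v) = f^0 - f^0 = 0$ via the normalization \eqref{2.5}, then invoke Theorem~\ref{T2.2}(II) for both existence and uniqueness up to an additive constant.
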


\begin{corollary}\label{C2.4}
    In a special case $f=b$,
    \begin{equation}\label{2.14}
    b^0:=(b,v):=\il_0^1bv\,dx>0
    \etext{if and only if}
    F(1):=\il_0^1\frac{b(x)}{a(x)}\,dx>0.
    \end{equation}
\end{corollary}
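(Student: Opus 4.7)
The plan is to compute $b^0=(b,v)$ explicitly in terms of the constant $c$ that governs the first integral \eqref{2.7} of the adjoint equation, and then to read off the sign of $c$ from the periodicity condition imposed on \eqref{2.8}. This should give the equivalence directly, without needing any new estimates.

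First I would integrate the relation $(av)'+bv=c$ in \eqref{2.7} over one period $[0,1]$. By $1$-periodicity of $a$ and $v$, the boundary contribution $[av]_0^1$ vanishes, so
\begin{equation*}
b^0:=(b,v)=\il_0^1 bv\,dx=c.
\end{equation*}
Thus it suffices to determine the sign of $c$. Evaluating \eqref{2.8} at $x=1$ and using $v(1)=v(0)$ (and $a(1)=a(0)$) from Part (I) of Theorem \ref{T2.2} yields
\begin{equation*}
a(0)v(0)=e^{-F(1)}\Big[a(0)v(0)+c\il_0^1 e^{F(y)}\,dy\Big],
\end{equation*}
which I would solve for $c$ to obtain
\begin{equation*}
c=\frac{a(0)v(0)\,\big(e^{F(1)}-1\big)}{\il_0^1 e^{F(y)}\,dy}.
\end{equation*}

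Finally I would collect the sign information. The denominator is strictly positive, and $a(0)>0$ by hypothesis. By Part (I) of Theorem \ref{T2.2}, the normalized periodic adjoint solution $v$ cannot vanish anywhere, and the normalization $\int_0^1 v\,dx=1$ forces $v>0$ on $\bR^1$; in particular $v(0)>0$. Hence the sign of $b^0=c$ coincides with the sign of $e^{F(1)}-1$, which in turn coincides with the sign of $F(1)$, giving exactly the claimed equivalence in \eqref{2.14}.

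There is no real obstacle here — the proof is a three-line consequence of what has already been set up in Theorem \ref{T2.2}. The only point worth stating carefully is the strict positivity of $v(0)$, which is what allows one to conclude the equivalence (rather than merely a one-sided implication) and which relies on the no-sign-change statement in Part (I) combined with the normalization \eqref{2.5}.
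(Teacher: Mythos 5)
Your proof is correct and follows essentially the same route as the paper: integrate \eqref{2.7} over a period to get $b^0=c$, then extract the sign of $c$ from the periodicity constraint $v(1)=v(0)$ in \eqref{2.8}. You simply make explicit what the paper leaves terse, namely the formula $c=a(0)v(0)\bigl(e^{F(1)}-1\bigr)/\int_0^1 e^{F(y)}\,dy$ and the strict positivity $v(0)>0$ coming from Part~(I) together with the normalization \eqref{2.5}.
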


\begin{proof}
    Since $a,b,v\in\bS$, from \er{2.7} it follows $\,b^0:=(b,v)=c$. In order to guarantee the equality $\,v(1)=v(0)\,$ in \er{2.8}, we must have $\sign \,c=\sign F(1)$.
\end{proof}

\begin{theorem}\label{T2.5}
    Let $a,b\in\bS$, $a=a(x)>0$, and
    \begin{equation}\label{2.15}
    F(1):=\il_0^1\frac{b(x)}{a(x)}\,dx>0.
    \end{equation}
    For $\ve>0$, denote $a^{\ve}(x):=a(\ve^{-1}x),\;
    b^{\ve}(x):=b(\ve^{-1}x)$, and let $u_{\ve}(x)$ be a solution to the Dirichlet problem
    \begin{equation}\label{2.16}
    L^{\ve}u_{\ve}:=a^{\ve}u''_{\ve}+\ve^{-1}b^{\ve}u'_{\ve}=0;\qquad
    u_{\ve}(0)=0,\quad u_{\ve}(1)=1.
    \end{equation}
    Then $u_{\ve}\to 1$ as $\,\ve\to 0^+$ uniformly on every interval $[\delta,1],\;0<\delta<1$.
\end{theorem}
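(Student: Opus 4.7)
My plan is to use the explicit solution formula from Lemma \ref{L2.1}(I) and exploit the positivity of $F(1)$ to show that $u_\ve$ transitions from $0$ to $1$ inside a boundary layer of width $O(\ve)$ near $x=0$ and stays exponentially close to $1$ on the rest of $[0,1]$.

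First I would rewrite \er{2.16} in the form $u_\ve'' + \ve^{-1}(b^\ve/a^\ve)u_\ve' = 0$ and apply Lemma \ref{L2.1}(I) (with coefficient ``$a$'' equal to $1$ and ``$b$'' equal to $\ve^{-1} b^\ve/a^\ve$). This yields that every solution has the form $A + B\int_0^x e^{-F_\ve(y)}\,dy$, where the substitution $z=\ve t$ gives
\begin{equation*}
    F_\ve(y) := \ve^{-1}\int_0^y \frac{b(z/\ve)}{a(z/\ve)}\,dz = F(y/\ve),
\end{equation*}
with $F$ as in \er{2.1}. Imposing $u_\ve(0)=0$ and $u_\ve(1)=1$ forces
\begin{equation*}
    u_\ve(x) = \frac{\int_0^x e^{-F(y/\ve)}\,dy}{\int_0^1 e^{-F(y/\ve)}\,dy}.
\end{equation*}

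The next step is to split $F(t) = F(1)\,t + G(t)$, where $G(t) := \int_0^t \bigl(b(z)/a(z) - F(1)\bigr)\,dz$. A direct computation using the $1$-periodicity of $b/a$ shows $F(t+1)=F(t)+F(1)$, hence $G$ is continuous and $1$-periodic, so $e^{\pm G}$ is bounded between two positive constants. The substitution $t=y/\ve$ in both integrals above rewrites the difference $1-u_\ve$ as
\begin{equation*}
    1-u_\ve(x) = \frac{\int_{x/\ve}^{1/\ve} e^{-F(t)}\,dt}{\int_0^{1/\ve} e^{-F(t)}\,dt}.
\end{equation*}
Since $F(1)>0$ by \er{2.15}, the integrand $e^{-F(t)} = e^{-F(1)t}\,e^{-G(t)}$ decays exponentially, so $I := \int_0^\infty e^{-F(t)}\,dt$ is finite and strictly positive. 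As $\ve\to 0^+$ the denominator increases to $I$, so it is bounded below by $I/2$ for all sufficiently small $\ve$, while for $x\ge\delta$ the numerator is controlled by
\begin{equation*}
    \int_{\delta/\ve}^{\infty} e^{-F(t)}\,dt \le \|e^{-G}\|_\infty \cdot F(1)^{-1}\, e^{-F(1)\delta/\ve}.
\end{equation*}
Dividing the two estimates gives $0 \le 1-u_\ve(x) \le N_\delta\,e^{-F(1)\delta/\ve}$ for $x\in[\delta,1]$, which is the desired uniform convergence.

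The conceptual core of the argument, and the reason for the hypothesis \er{2.15}, is that $F(1)>0$ is exactly what makes $e^{-F}$ integrable on $[0,\infty)$ and forces the boundary layer to sit at $x=0$ rather than $x=1$; if instead $F(1)<0$, the same analysis would give $u_\ve\to 0$ on $[0,1-\delta]$, and if $F(1)=0$ the solution $u_\ve$ would remain comparable to the linear profile $x$. Once this is recognized, no real obstacle remains: everything reduces to elementary estimates of the explicit integral above, and no appeal to the general Fredholm machinery of Theorem \ref{T2.2} is actually needed in this one-dimensional situation (though Corollary \ref{C2.4} already records the equivalence between the sign of $F(1)$ and the sign of $(b,v)$, which is what will matter in the parabolic analogue).
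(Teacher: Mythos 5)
Your proof is correct, and it takes a genuinely different route from the paper's. The paper never writes down the explicit solution of \eqref{2.16}; instead it runs the asymptotic--expansion machinery that is the point of Section~\ref{S2}: via Corollary~\ref{C2.3} it introduces correctors $A,B\in\bS$ solving $LA=a-a^0$, $LB=b-b^0$, forms the modified comparison function $w_{K,\ve}=u_\ve-h_K+g_{K,\ve}$ with the exponential profile $h_K$, verifies that the dominant term $-\ve^{-1}b^0h_K'$ (negative because $b^0>0$ by Corollary~\ref{C2.4}, which is where \eqref{2.15} enters) makes $L^\ve w_{K,\ve}<0$ for small $\ve$, and concludes by the maximum principle. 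You instead integrate the ODE in closed form using Lemma~\ref{L2.1}(I), reduce $1-u_\ve$ to a ratio of two integrals of $e^{-F(t)}$, and use the decomposition $F(t)=F(1)t+G(t)$ with $G$ periodic to get an explicit exponential bound $0\le 1-u_\ve(x)\le N_\delta\,e^{-F(1)\delta/\ve}$ on $[\delta,1]$. Your argument is shorter and gives a sharper, quantitative rate of convergence, at the cost of being tied to the one--dimensional setting where the solution is available by quadrature; the paper's barrier/corrector method is the one that carries over to the parabolic mixed--type equation \eqref{4.1} in Section~\ref{S4}, where no such explicit formula exists, which is why the authors present it here as a warm-up. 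Your closing remark about the sign of $F(1)$ is exactly Remark~\ref{R2.9}, and your observation that the Fredholm machinery of Theorem~\ref{T2.2} is not needed for this particular statement is accurate --- your route bypasses Corollary~\ref{C2.3} entirely and uses \eqref{2.15} directly rather than through the equivalent condition $b^0>0$.

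Two minor points worth tightening if you were to write this out in full: (i) the lower bound on the denominator does not even require $\ve$ small --- for $\ve\le 1$ one already has $\int_0^{1/\ve}e^{-F}\ge\int_0^1 e^{-F}>0$, which avoids the ``for sufficiently small $\ve$'' caveat; (ii) you should state explicitly that $\int_0^1 e^{-F(y/\ve)}\,dy>0$ (which is obvious, but it is the nondegeneracy that makes the normalization by $u_\ve(1)=1$ legitimate and gives $0\le u_\ve\le 1$). Neither affects the correctness of the argument.
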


\begin{proof}
    Denote $\;a^0:=(a,v),\quad b^0:=(b,v)$,
    where $v$ is defined by \er{2.4}--\er{2.5}. We know from \er{2.14} that $b^0>0$. By Corollary \ref{C2.3}, there exist functions $A,B\in\bS$ satisfying
    \[
    LA=a-a^0,\qquad LB=b-b^0.\]
    Denote
    $A^{\ve}(x):=A(\ve^{-1}x),\;B^{\ve}(x):=B(\ve^{-1}x) $.
    Then
    \begin{equation}\label{2.17}
    L^{\ve}A^{\ve}=\ve^{-2}(a^{\ve}-a^0),\qquad
    L^{\ve}B^{\ve}=\ve^{-2}(b^{\ve}-b^0).
    \end{equation}
    For fixed $K=\con>1$ and small $\ve>0$, consider the functions
    \begin{equation}\label{2.18}
    h_K(x):=\frac{1-e^{-Kx}}{1-e^{-K}},
    \itext{and}
    w_{K,\ve}:=u_{\ve}-h_K+g_{K,\ve},
    \end{equation}
    where
    \[ \,g_{K,\ve}:= \ve^2\,\big(A^{\ve}h''_K+\ve^{-1}B^{\ve}h'_K\big).\]
    Using \er{2.16}, \er{2.17}, and the elementary equality
    \[ L^{\ve}(f_1f_2)
    =f_2L^{\ve}f_1+f_1L^{\ve}f_2+2a^{\ve}f'_1f'_2,\]
    we can write
    \[ L^{\ve}w_{K,\ve}=-L^{\ve}h_K+I_1+I_2+I_3,\]
    where
    \begin{eqnarray*}
      I_1 &:= & (a^{\ve}-a^0)h''_K+\ve^{-1}(b^{\ve}-b^0)h'_K
      =L^{\ve}h_K-a^0h''_K-\ve^{-1}b_0h'_K,\\
      I_2 &:= & \ve^{2}\big(A^{\ve}L^{\ve}h''_K
      +\ve^{-1}B^{\ve}L^{\ve}h'_K\big),\\
      I_3 &:= & 2\ve^2a^{\ve}\big((A^{\ve})'h'''_K
      +\ve^{-1}(B^{\ve})'h''_K\big).
    \end{eqnarray*}
    Since $A$ and $B$ are smooth, the derivatives $(A^{\ve})'$ and $(B^{\ve})'$ are of order $\ve^{-1}$.  Therefore, for fixed $K>1$, after cancelation of terms $\pm L^{\ve}h_K$ in the expression for $L^{\ve}w_{K,\ve}$, the remaining terms are bounded by constants independent of $\ve$, except for a negative term $-\ve^{-1}b_0h'_K$ of order $\ve^{-1}$. This term guarantees that for small $\ve>0$, we have $L^{\ve}w_{K,\ve}<0\,$ on $[0,1]$.
    \smallskip

    For arbitrary $\delta$ and $\delta_0$ in $(0,1)$, one can choose $K>1$ such that $1\ge h_K\ge 1-\delta_0$ on $[\delta,1]$, and then choose a small $\ve>0$ such that
    \[ L^{\ve}w_{K,\ve}<0\itext{and}
    |g_{K,\ve}|\le\delta_0\itext{on} [0,1].\]
    We have
    $\,w_{K,\ve}=g_{K,\ve}\ge -\delta_0$ at the ends of the interval $[0,1]$. By the maximum principle, $w_{K,\ve}\ge -\delta_0$ on $[0,1]$, and
    \[ 1\ge u_{\ve}=h_K+w_{K,\ve}-g_{K,\ve}
    \ge 1-3\delta_0
    \itext{on} [\delta,1].
    \]
    Since $\delta_0>0$ can be made arbitrarily small, the theorem is proved.
\end{proof}

As an easy application of this theorem, consider the following generalization of Example 1.7 in \cite{FS}.

\begin{corollary}\label{C2.6}
    Let $a_1$ and $a_2$ be even positive functions in $\bS$ such that
    \begin{equation}\label{2.19}
    \il_0^1\frac{a'_1(x)}{a_1(x)+a_2(x)}\,dx>0.
    \end{equation}
    For $\ve>0$ and $j=1,2$, denote $a_j^{\ve}(x):=a_j(\ve^{-1}x)$, and let $u_{\ve}(x)$ be solutions to the Dirichlet problems
    \begin{equation}\label{2.20}
    L^{\ve}u_{\ve}:=(a_1^{\ve}u'_{\ve})'+a_2^{\ve}u''_{\ve}=0
    \etext{in} (-1,1);\quad
    u_{\ve}(-1)=-1,\quad u_{\ve}(1)=1.
    \end{equation}
    Then
    \begin{equation}\label{2.21}
    u_{\ve}\to \sign x:=
    \begin{cases}
    1 & \etext{if} x>0, \\
    0 & \etext{if} x=0, \\
    -1 & \etext{if} x<0, \\
    \end{cases}
    \etext{as} \ve\to 0^+,
    \end{equation}
    uniformly on every set
    $[-1,-\delta]\cup [\delta,1],\;0<\delta<1$.
\end{corollary}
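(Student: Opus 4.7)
The plan is to reduce Corollary \ref{C2.6} directly to Theorem \ref{T2.5} by expanding the equation in (2.20) into the form considered there and then exploiting the evenness of the coefficients to handle the symmetric interval $(-1,1)$.

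First, I would expand the divergence term: $(a_1^\ve u_\ve')' = a_1^\ve u_\ve'' + (a_1^\ve)' u_\ve' = a_1^\ve u_\ve'' + \ve^{-1}(a_1')^\ve u_\ve'$. Setting $a:=a_1+a_2$ and $b:=a_1'$, the equation (2.20) becomes
\begin{equation*}
    a^\ve u_\ve'' + \ve^{-1} b^\ve u_\ve' = 0 \itext{on} (-1,1),
\end{equation*}
which is exactly the form $L^\ve u_\ve=0$ from \er{2.16}. Since $a_1,a_2\in\bS$ and $a_1,a_2>0$, we have $a,b\in\bS$ with $a>0$; moreover, the hypothesis \er{2.19} is literally $\int_0^1 b/a\,dx>0$, which is the positivity assumption \er{2.15} of Theorem \ref{T2.5}.

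Second, I would use evenness to show that $u_\ve$ is odd, so in particular $u_\ve(0)=0$. Since $a_1,a_2$ are even, so are $a_1^\ve,a_2^\ve$, and a direct change of variables shows that $\tl u_\ve(x):=-u_\ve(-x)$ satisfies the same ODE on $(-1,1)$ with the same boundary data $\tl u_\ve(\pm 1)=\pm 1$. Uniqueness for the two-point Dirichlet problem for this non-degenerate linear second-order ODE then yields $\tl u_\ve\equiv u_\ve$, i.e. $u_\ve$ is odd. Consequently, the restriction of $u_\ve$ to $[0,1]$ solves $L^\ve u_\ve=0$ with $u_\ve(0)=0,\;u_\ve(1)=1$, placing us exactly in the setup of Theorem \ref{T2.5}.

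Applying Theorem \ref{T2.5} yields $u_\ve\to 1$ uniformly on $[\delta,1]$ for every $\delta\in(0,1)$, and by oddness $u_\ve\to -1$ uniformly on $[-1,-\delta]$; combined these give \er{2.21}. There is no serious obstacle here: the whole reduction is algebraic, and the only substantive input (the one-sided convergence of $u_\ve$ to a constant on the interior of an interval) is already supplied by Theorem \ref{T2.5}. The only point requiring a little care is verifying the symmetry/uniqueness that delivers $u_\ve(0)=0$, which is what allows the one-sided Dirichlet result of Theorem \ref{T2.5} to be used on both halves $[0,1]$ and $[-1,0]$.
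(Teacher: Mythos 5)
Your proposal matches the paper's proof essentially verbatim: rewrite the operator with $a=a_1+a_2$, $b=a_1'$, use evenness to conclude that $-u_\ve(-x)$ is also a solution, invoke uniqueness to get oddness and hence $u_\ve(0)=0$, and then apply Theorem \ref{T2.5} on $[0,1]$ together with oddness on $[-1,0]$. No discrepancies.
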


\begin{proof}
    It is easy to see that the equation $L^{\ve}u_{\ve}=0$ in \er{2.20} can be written in the form \er{2.16} with $a(x):=a_1(x)+a_2(x)$ and $b(x):=a'_1(x)$. Since $a_j^{\ve}$ are even functions, the functions $v_{\ve}(x):=-u_{\ve}(-x)$ also satisfy \er{2.20}. By uniqueness, we must have $v_{\ve}\eq u_{\ve}$, i.e. $u_{\ve}(x)$ are odd functions. In particular, $u_{\ve}(0)=0$, so that $u_{\ve}$ satisfy \er{2.16}. Now \er{2.21} follows immediately from the previous theorem.
\end{proof}

\begin{remark}\label{R2.7}
    The property \er{2.21} implies that for solutions $u_{\ve},\,\ve>0$, to the equation \er{2.20} of mixed type, there are no uniform estimates for the modulus of continuity at the point $x=0$. Moreover, the non-negative functions $\tl{u}_{\ve}:=1+{u}_{\ve}$ also satisfy
    $L^{\ve}\tl{u}_{\ve}=0$ in $(-1,1)$, but the Harnack inequality \er{1.2} fails for $u=\tl{u}_{\ve},\,Y=Y_r=0,\,r=1/2$, because it easily brings to a contradiction:
    \[ 1=\tl{u}_{\ve}(0)
    \le\sup_{(-1/2,\,1/2)} \tl{u}_{\ve}
    \le N\cdot \inf_{(-1/2,\,1/2)} \tl{u}_{\ve}
    =N\cdot\tl{u}_{\ve}\Big(-\frac{1}{2}\Big)
    \to 0\itext{as}\ve\to 0^+.\]
\end{remark}

\begin{remark}\label{R2.8}
    There is a lot of flexibility in the choice of functions $a_j(x)$ satisfying \er{2.19}. We can assume that $|a_j-1|\le\delta_0$ with an arbitrary small $\delta_0>0$. For example, one can take
    \[ a_1:=1+\eta_1,\quad a_2:=1-\eta_1+\eta_2,
    \itext{where} 0\le \eta_1, \eta_2\le \delta_0,\]
    both functions $\eta_1$ and $\eta_2$ are even, belong to $\bS$, vanish near integers $x\in \bZ$, not identically zero, and $\eta_1$ has compact support in the set $\{x:\; \eta'_2(x)>0\}$. Then $\eta_1\eta'_2\ge 0$, and \er{2.19} holds true:
    \[\il_0^1\frac{a'_1(x)}{a_1(x)+a_2(x)}\,dx
    =\il_0^1 \frac{d\eta_1}{2+\eta_2}
    =\il_0^1 \frac{\eta_1\eta'_2}{(2+\eta_2)^2}\,dx
    >0.\]
\end{remark}

\begin{remark}\label{R2.9}
    The assumption \er{2.19} in Corollary \ref{C2.6} is sharp in the following sense: if we replace ``$>$'' by ``$<$'', then instead of \er{2.21}, we will have convergence $u_{\ve}\to 0$ uniformly on every interval $[-\delta,\delta],\;0<\delta<1$. For the proof of this fact, one can note that if in the formulation of Theorem \ref{T2.5}, the assumption $F(1)>0$ is replaced by $F(1)<0$, then $u_{\ve}\to 0$ as $\ve\to 0^+$ uniformly on every interval $[0,\delta],\;0<\delta<1$. In turn, the proof of this statement only requires substitution of the function $h_K$ in \er{2.18}, which satisfies $h''_K+Kh'_K=0$, by
    \[ h_K(x):=\frac{e^{Kx}-1}{e^K-1}
    \itext{satisfying}h''_K-Kh'_K=0.\]
\end{remark}

\section{Existence of periodic solutions to parabolic equations}\label{S3}

In this section, we discuss the existence of solutions to second order parabolic equations
\begin{equation}\label{3.1}
    Lu:=-p\pa_tu+a_{ij}D_{ij}u+b_iD_iu=f,
\end{equation}all functions $p,a_{ij},b_j,f$, and $u$ are {\em periodic both in $t$ and $x$}, $a_{ij}$ satisfy the uniform parabolicity condition \er{U} with a constant $\nu\in (0,1]$, and also $p=p(t,x)\in [\nu,\nu^{-1}]$.
 This requires certain conditions on these functions, which are in different situations are usually referred to as the Fredholm alternative. For elliptic equations, including higher order case, these conditions are presented from the point of view of Fourier analysis in Part 2, Ch. 3 of the book \cite{BJS}. One can adjust the general approach there to our equations \er{3.1}. However, since we need it only in a very particular case, we give a proof for completeness, and also as an application of the Harnack inequality. Formally, by dividing both parts of \er{3.1} by $p$, one can reduce this equation to the case $p=1$, with $\nu^2$ in place of $\nu$.
 However, this is not the case with the adjoint equation $L^*v=0$ in \er{3.2} below.

 \smallskip

Let $l$ and $l_0$ be given positive constants. Denote
    \[ K_0:=[0,l]^n\su \bR^n,\qquad
    K:=[0,l_0]\times K_0\su \bR^{n+1}.\]
We say that the function $u=u(t,x)=u(t,x_1,\ldots,x_n)$ is $K$-periodic if it is defined on the whole space $\bR^{n+1}$, $l_0$-periodic with respect to $t$, and $l$-periodic with respect to each of variables $x_1,\ldots,x_n$. Let $\bS(K)$ denote the class of all $K$-periodic functions $u(t,x)$ on $\bR^{n+1}$, which are continuous in $\bR^{n+1}$ together with all derivatives $\pa_tu, D_iu, D_{ij}u$.
\smallskip

Roughly speaking, the Fredholm alternative states that the equation \er{3.1} has a solution in $\bS(K)$ if and only if $f$ is orthogonal in $\bL^2(K)$ to solutions $v$ of the corresponding homogeneous adjoint equation
\begin{equation}\label{3.2}
    L^*v:=\pa_t(pv)+D_{ij}(a_{ij}v)-D_i(b_iv)=0.
\end{equation}
For simplicity, we assume that all the functions $p,a_{ij},b_i,u,v\,$ belong to $\bS(K)$, so that we deal with classical solutions of equations \er{3.1} and \er{3.2}.

\begin{theorem}\label{T3.1}
    Let $a_{ij}$ satisfy the uniform parabolicity condition \er{U}, $p,\,a_{ij}$, and $b_i$ belong to $\bS(K)$, and let $v\in \bS(K)$ be a strictly positive solution of \er{3.2}. Then for a given function $f\in \bS(K)$, the equation \er{3.1} has a solution $u\in \bS(K)$ if and only if
\begin{equation}\label{3.3}
    (f,v):=\iil_K (fv)(t,x)\,dx\,dt=0.
\end{equation}
\end{theorem}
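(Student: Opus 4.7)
The plan is to prove necessity via a direct Green-type identity and sufficiency by recasting the problem as a fixed-point equation for the one-period (Poincar\'e) map of the linear evolution, to which the Riesz--Schauder theorem applies. For \emph{necessity}, I would integrate $vLu - uL^*v$ over the space-time cube $K$. On the spatial cube $K_0$, the expressions $va_{ij}D_{ij}u - uD_{ij}(a_{ij}v)$ and $vb_iD_iu + uD_i(b_iv)$ integrate to zero by spatial periodicity, while the time terms collapse to $-pv\pa_tu - u\pa_t(pv) = -\pa_t(puv)$. Thus
\[
\il_{K_0}\bigl(vLu - uL^*v\bigr)\,dx = -\frac{d}{dt}\il_{K_0} p u v\,dx,
\]
and integrating in $t$ from $0$ to $l_0$ the right side vanishes by $l_0$-periodicity of $p, u, v$; with $L^*v = 0$ and $Lu = f$ this yields $(f,v) = 0$.

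For \emph{sufficiency}, assume $(f,v) = 0$. Let $w(t,x)$ solve the Cauchy problem $Lw = f$ on $[0,l_0]\times\bR^n$ with spatial $l$-periodicity and $w(0,\cdot)\eq 0$, and for each $\phi$ in the H\"older torus space $X := C^{2+\alpha}(\bR^n/l\bZ^n)$ let $z_\phi$ solve the homogeneous equation $Lz_\phi = 0$ with $z_\phi(0,\cdot) = \phi$. Define the period map $T\colon X\to X$ by $T\phi := z_\phi(l_0,\cdot)$. Then $u := w + z_\phi$ satisfies $Lu = f$ and extends $l_0$-periodically in $t$ to an element of $\bS(K)$ precisely when $(I - T)\phi = w(l_0,\cdot)$. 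Parabolic Schauder estimates place $T\phi$ into $C^{2+\alpha'}$ with $\alpha'>\alpha$, so $T$ is compact on $X$ and $I - T$ is Fredholm of index $0$.

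It remains to identify kernels. If $T\phi = \phi$, then $z_\phi$ extends $l_0$-periodically in $t$ to a classical solution of $Lz = 0$ on $\bR^{n+1}$; by the strong maximum principle applied to $\pa_tz = (a_{ij}D_{ij}z + b_iD_iz)/p$ on the compact space-time torus, $z$ must be constant, so $\ker(I - T) = \bR$. Fredholm then forces $\dim\ker(I - T^*) = 1$, where $T^*$ is the $L^2(K_0)$-dual. Applying the Green identity of the first step with $u = z_\phi$, $Lu = 0$, and exploiting $l_0$-periodicity of $p$ and $v$ yields $\langle T\phi,\,p(0,\cdot)v(0,\cdot)\rangle = \langle\phi,\,p(0,\cdot)v(0,\cdot)\rangle$, so $\mu(x) := p(0,x)v(0,x) > 0$ generates $\ker(I - T^*)$. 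Applying the same identity to $(w,v)$ with $w(0,\cdot) = 0$ gives
\[
\il_{K_0} p(0,x)\,w(l_0,x)\,v(0,x)\,dx = -(f,v) = 0,
\]
which is precisely the Fredholm solvability condition for $(I - T)\phi = w(l_0,\cdot)$. A suitable $\phi$ exists, producing the desired $u = w + z_\phi\in\bS(K)$, unique up to an additive constant. The main technical obstacle is to realize $T$ on a Banach space on which Schauder theory delivers compactness, and to correctly align the $p$-weighted pairing on $K_0$ used for the Fredholm alternative with the spacetime pairing $(f,v)$ in the hypothesis; once this alignment is in place, both directions of the theorem hinge on the single Green-type identity above.
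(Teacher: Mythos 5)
Your necessity argument is the same Green-type identity the paper uses, and it is correct. Your sufficiency argument, however, takes a genuinely different route. The paper also begins by solving the Cauchy problem with zero initial data, but then proceeds \emph{constructively}: it sets $u_k(t,x):=u_0(t+kl_0,x)$, $w_k:=u_{k+1}-u_k$, observes $Lw_k=0$, and uses the parabolic oscillation estimate (Theorem~\ref{T1.2}, a form of the Harnack inequality) together with spatial periodicity to show $\osc w_k\le\theta^k\osc w_0$ with $\theta\in(0,1)$. The crucial role of the orthogonality hypothesis $(f,v)=0$ in the paper is exactly the same integral
\[
\il_{K_0}(pw_kv)(0,x)\,dx=0,
\]
but there it is used to force $w_k$ to change sign, giving $\sup|w_k|\le 2\,\osc w_k\le 2\theta^k\osc w_0$ and hence uniform convergence of $u_k$ to the desired time-periodic solution. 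Your version instead recasts the periodicity constraint as $(I-T)\phi=w(l_0,\cdot)$ for the Poincar\'e period map $T$, uses Schauder estimates for compactness, the Riesz--Schauder theorem for Fredholmness of index $0$, the strong maximum principle on the space-time torus to identify $\ker(I-T)=\bR$, and the Green identity to exhibit $\mu=p(0,\cdot)v(0,\cdot)>0$ as the generator of the cokernel. Both routes hinge on the same Green identity; the paper's route is more elementary and showcases the Harnack inequality directly (the authors explicitly present the proof ``also as an application of the Harnack inequality''), gives an explicit geometric rate of approximation, and avoids any discussion of dual spaces; your route is the classical functional-analytic one (closer in spirit to Lieberman's cited work) and packages the solvability condition neatly as a Fredholm orthogonality relation, at the cost of the duality bookkeeping you flag — one does need to verify that the $L^2(K_0)$-pairing with $\mu$ genuinely represents the annihilator of $\mathrm{ran}(I-T)$ in the dual of $C^{2+\alpha}(\bR^n/l\bZ^n)$, which follows because the $L^2$ functional is continuous on that space, $\mu\not\equiv 0$, and the dimension count closes the gap. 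Both arguments are valid.
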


\begin{proof}
    Let $u\in \bS(K)$ satisfy the equation \er{3.1}. Consider the function
    \begin{equation}\label{3.4}
        I(t):=\il_{K_0}(puv)(t,x)\,dx.
    \end{equation}
    By the smoothness assumptions and equalities \er{3.1}--\er{3.2}, we obtain
    \begin{gather*}
    I'(t)=\il_{K_0}\pa_t(puv)(t,x)\,dx
    =\il_{K_0}\big(vp\,\pa_tu+u\,\pa_t(pv)\big)\,dx  \\
    =\il_{K_0}\Big[v\,\big( a_{ij}D_{ij}u+b_iD_iu-f\big)
    +u\,\big(-D_{ij}(a_{ij}v)+D_i(b_iv)\big)\Big]\,dx.
    \end{gather*}
    Since all the functions $p,a_{ij},b_i,u,v$ are $l$-periodic with respect to $x_i,x_j$, after applying Fubini's theorem and integrating by parts, the terms with $a_{ij}$ and $b_i$ cancel and we get
    \begin{equation}\label{3.5}
    I'(t)=-\il_{K_0}(fv)(t,x)\,dx.
    \end{equation}
    The function $I(t)$ is $l_0$-periodic, because $u$ and $v$ are $l_0$-periodic with respect to $t$. This implies for an arbitrary $T$
    \begin{equation}\label{3.6}
    \il_T^{T+l_0}\bigg[\il_{K_0}(fv)(t,x)\,dx\bigg]\,dt
    =-\il_T^{T+l_0}I'(t)\,dt
    =I(T)-I(T+l_0)=0.
    \end{equation}
    In particular, taking $T=0$, we get the desired equality \er{3.3}, and the theorem is proved in one direction.
    \smallskip

    For the proof in the opposite direction, fix a function $f\in \bS(K)$ satisfying \er{3.3}, and let $u_0=u_0(t,x)$ be a bounded solution to the Cauchy problem
    \[ Lu_0(t,x)=f(t,x)\itext{in}\bR^{n+1}_+:=\{t>0,\;x\in\bR^n\};\qquad
    u_0(0,x)\eq 0.\]
    It is well known (see, e.g. \cite{Fr, LSU}) that there exists a classical solution to this problem, which is unique in the class of bounded functions. From the uniqueness and $\,l$-periodicity of $p,a_{ij},b_i,f$ with respect to $x_k,\; (k=1,\ldots,n)$, it follows that $u_0(t,x+le_k)\eq u_0(t,x)$, i.e. the solution $u_0$ is also $l$-periodic with respect to $x_k$. Note that we did not use periodicity with respect to $t$ in the proof of the equality \er{3.5} for the function $I(t)$ in \er{3.4}. Hence the same equality holds true with $u_0$ instead of $u$, namely,
    \[ I_0(t):=\il_{K_0}(pu_0v)(t,x)\,dx\itext{satisfies}
    I'_0(t)=-\il_{K_0}(fv)(t,x)\,dx.\]
    In addition, we have $I(0)=0$ and the equality \er{3.3}. Therefore,
    \[ I_0(l_0)=\il_0^{l_0}I'_0(t)\,dt
    =-\il_0^{l_0}\bigg[\il_{K_0}(fv)(t,x)\,dx\bigg]\,dt
    =-\iil_K (fv)(t,x)\,dx\,dt
    =0.\]
    Moreover, since $f$ and $v$ are $l_0$-periodic with respect to $t$, the function $I'_0(t)$ is also $l_0$-periodic, and we obtain
    \begin{equation}\label{3.7}
     I_0(kl_0):=\il_{K_0}(pu_0v)(kl_0,x)\,dx=0
     \itext{for}k=0,1,2,\ldots.
    \end{equation}

    Further, consider the functions
    \begin{equation}\label{3.8}
    u_k(t,x):=u_0(t+kl_0,x),\quad w_k(t,x):=u_{k+1}(t,x)-u_k(t,x)
    \itext{on}\bR^{n+1}_+
    \end{equation}
    for $\,k=0,1,2,\ldots$. Since all the given functions are $l_0$-periodic with respect to $t$, we have
    \[ Lw_k=0\itext{in}\bR^{n+1}_+
    \itext{for}k=0,1,2,\ldots.
    \]
    We want to show that the functions $u_k$ converge uniformly on $\bR^{n+1}_+$ to a solution $u\in \bS(K)$ of the equation \er{3.1}. For this purpose, we will use the estimate for oscillation similar to \er{1.3} applied to the functions $w_k$. Denote
    \[c_k:=\osc_{\bR^{n+1}_+}w_k
    :=\sup_{\bR^{n+1}_+}w_k-\inf_{\bR^{n+1}_+}w_k
    \itext{for}k=0,1,2,\ldots.\]
    By the maximum principle and $l$-periodicity with respect to $x_1,\ldots,x_n$, we have
    \[ c_k=\osc_{\bR^n}w_k(0,x)=\osc_{K_0}w_k(0,x)
    \itext{for}k=0,1,2,\ldots.\]
    Fix a ball $B_r:=B_r(0)\su\bR^n$ such that $K_0:=[0,l]^n\su B_r$. Without loss of generality, we can assume that $l_0=4r^2$ (using rescaling $t\to\con \cdot t\,$ if necessary). Then for $Y:=(l_0,0)=(4r^2,0)$, we have $C_r(Y)\su C_{2r}(Y)\su \bR^{n+1}_+$.
    \smallskip

    Note that in Theorem \ref{T1.1}, the Harnack inequality is formulated in a simplified form, simultaneously for divergence
    without lower order terms $b_i$, with a constant $N$ depending only on $n$ and $\nu$. In fact, it is true for more general equations \er{3.1} with $N$ depending also on $r$ and on the constant $M\ge |b_i|$ (see \cite{KS, K, Li}). Therefore, the estimate \er{1.3},
    \[ \osc_{C_r(Y)}w_k \le \theta \cdot \osc_{C_{2r}(Y)}w_k
    \]
    is also true with a constant $\theta \in (0,1)\,$ which does not depend on $k$. Further, from \er{3.8} it follows
    \[w_{k+1}(t,x)\eq w_k(t+l_0,x)\itext{on} \bR^{n+1}_+.
    \]
    \smallskip

    Combining together the previous relations, and also $\{l_0\}\times K_0\su\ol{C_r(Y)}$, we find:
    \[c_{k+1}=\osc_{K_0}\,w_{k+1}(0,x)
     =\osc_{K_0}\,w_k(l_0,x)
     \le
     \osc_{C_r(Y)}w_k \le \theta \cdot \osc_{C_{2r}(Y)}w_k
     \le \theta \cdot c_k. \]
     By induction,
    \[ c_k\le \theta^k\,c_0
    \itext{for} k=0,1,2,\ldots.\]
    Moreover, by virtue of \er{3.7},
    \[ \il_{K_0}(pw_kv)(0,x)\,dx
    =\il_{K_0}(pu_0v)(kl_0,x)\,dx - \il_{K_0}(pu_0v)\big((k+1)l_0,x\big)\,dx =0.\]
    Since $p,v>0$, the functions $w_k$ must change sign on $\bR^{n+1}_+$ unless $u_k(0,x)\eq 0$. This guarantees the estimate
    \begin{equation}\label{3.9}
    \sup_{\bR^{n+1}_+}\,|w_k|
    \le 2\,\osc_{\bR^{n+1}_+}\,w_k
    =:2c_k\le 2\,\theta^k\,c_0
    \itext{for} k=0,1,2,\ldots.
    \end{equation}
    For $k\ge1$,  we have
    \[ u_k=u_0+w_0+w_1+\cdots+w_{k-1}.\]
    The estimate \er{3.9} implies that $u_k$ is a Cauchy sequence, so it converges uniformly on $\bR^{n+1}_+$. In addition, $Lu_k=f$ in $\bR^{n+1}_+$ for all $k$. By standard results in the theory of parabolic equations with smooth coefficients, the limit function $u=u(t,x)$ is smooth and satisfies $Lu=f$ in $\bR^{n+1}_+$. Finally,
    \[ u(l_0,x)-u(0,x)=\lim_{k\to\8}\big[u_k(l_0,x)-u_k(0,x)\big]
    =\lim_{k\to\8}w_k(0,x)=0.\]
    This means that $u(t,x)$ is $l_0$-periodic with respect to $t$, and therefore $u\in \bS(K)$. The theorem is proved.
\end{proof}

\section{Parabolic equations of mixed type}\label{S4}

We consider second order parabolic equation \er{1.7} in the case $x\in \bR^1$:
\begin{equation}\label{4.1}
    Lu:=-pu_t+(au_x)_x=0.
\end{equation}
Here $p=p(t,x),\,a=a(t,x)\in [\nu,\nu^{-1}]$ for some constant $\nu\in (0,1]$, and the indices $t$ and $x$ indicate differentiation with respect to the corresponding variable:
$u_t:=\pa_t u,\,u_x:=\pa_x u$, etc. Our goal is to show that in this case, the Harnack inequality (Theorem \ref{T1.1}) and the H\"{o}lder estimate (Corollary \ref{C1.3}) fail in general. For simplicity, we assume that all the functions in this section are smooth, so that all the derivatives are understood in the classical sense. We can rewrite \er{4.1} in the form
\begin{equation}\label{4.2}
    Lu=-pu_t+au_{xx}+bu_x=0,
    \itext{where}b:=a_x.
\end{equation}
Without additional smoothness assumptions, we do not have control of the coefficient $b$. Nevertheless, if we assume that all the given data are periodic in both $t$ and $x$, then we still can use the Fredholm alternative result in Theorem \ref{T3.1}: the equation $Lu=f$ has a periodic solution if and only if $(f,v)=0$, where $v=v(t,x)$ is a strictly positive solution of the adjoint equation
\begin{equation}\label{4.3}
    L^*v:=(pv)_t+(av_x)_x=0.
\end{equation}

This result is reduced to more elementary Theorem \ref{T2.2} if we assume that $p,a,u,v$ are functions of one variable $y:=t+x$.  In order to comply with Section \ref{S2}, we keep same notations for these functions as in \er{4.1}--\er{4.3}, where they are functions of two variables $t$ and $x$. Then the above equalities are simplified as follows:
\begin{equation}\label{4.4}
    Lu:=-pu'+(au')'=au''+(b-p)u',
    \itext{where}b:=a',
\end{equation}
and
\begin{equation}\label{4.5}
    L^*v:=(pv)'+(av')'=0.
\end{equation}
We assume that all the functions in \er{4.4}--\er{4.5} belong to the class $\bS$ of all smooth $1$-periodic functions on $\bR^1$, and as in Theorem \ref{T2.2}, $v$ is a strictly positive solution of \er{4.5} in $\bS$ satisfying the condition
\begin{equation}\label{4.6}
    \il_0^1v(x)\,dx=1.
\end{equation}

For our purposes, we need to have $b_0:=(b,v)\ne 0$. The operator $L$ in \er{4.4} is different from $Lu:=au''+bu'$ in \er{2.3} by the presence of an additional term $p$. Because of this term, we do not have such a simple relation between $b_0$ and the coefficients of $L$ as in Corollary \ref{C2.4}, so we just use a very special particular case. Similarly to Remark \ref{R2.8}, take not identically zero functions $\eta_1, \eta_2\in \bS$, such that
    \[ 0\le \eta_1\le\delta_0, \quad
    |\eta_2|\le \delta_0,\quad
    \il_0^1 \eta_2(x)\,dx=0,\]
and $\eta_1$ has compact support in the set $\{x:\; \eta'_2(x)>0\}$.
Then the functions $a:=1+\eta_1,\, v:=1+\eta_2\in\bS\,$ satisfy \er{4.6} and
\begin{equation}\label{4.7}
    b^0:=(b,v):=\il_0^1 bv\,dx=\il_0^1 a'v\,dx
    =\il_0^1 \eta'_1(1+\eta_2)\,dx
    =\il_0^1 \eta'_1\eta_2\,dx>0.
\end{equation}
Obviously, we can make $v$ close to $1$, $a'v$ close to $\,0$, and choose $\,p\in\bS\,$ close to $1$ from the identity
\begin{equation}\label{4.8}
    av'+pv\eq 1,
\end{equation}
which in turn implies \er{4.5}.
\smallskip

We remind that by Corollary \ref{C2.3}, for an arbitrary $f\in\bS$ and $f^0:=(f,v)$, the equation $Lu=f-f^0$ is solvable in $\bS$.

\begin{theorem}\label{T4.1}
    For $p,a,b:=a'\in\bS\,$ and $\ve>0$, denote
    \begin{equation}\label{4.9}
    p^{\ve}:=p(y_{\ve}),\quad
    a^{\ve}:=a(y_{\ve}),\quad
    b^{\ve}:=b(y_{\ve}),
    \itext{where} y_{\ve}:=\ve^{-2}t+\ve^{-1}x,
    \end{equation}
    and also $p^0:=(p,v)>0,\,b^0:=(b,v)$, where $v\in\bS$ satisfies \er{4.5}--\er{4.6}. Let $g$ be a given function in $C_0^{\8}(\bR^1)$, and let $u^{\ve}=u^{\ve}(t,x)$ be a bounded solution to the Cauchy problem
    \begin{equation}\label{4.10}
    L^{\ve}u^{\ve}:=
    -p^{\ve}u_t^{\ve}+(a^{\ve}u_x^{\ve})_x=0,\quad t>0;
    \qquad u^{\ve}(0,x)=g(x).
    \end{equation}
    Then
    \begin{equation}\label{4.11}
    |u^{\ve}(t,x)-U^{\ve}(t,x)|\le N\cdot(\ve+t),\quad t>0;
    \end{equation}
    where
    \begin{equation}\label{4.12}
    U^{\ve}(t,x):=g(c\ve^{-1}t+x),\qquad
    c:=(p^0)^{-1}b^0,
    \end{equation}
    and the constant $N>0$ does not depend on $\,\ve>0$.
\end{theorem}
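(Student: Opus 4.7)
The plan is to carry out a one-term asymptotic expansion and then control the error by the parabolic maximum principle. The starting observation is that for any $h\in\bS$, since $\pa_t y_\varepsilon = \varepsilon^{-2}$ and $\pa_x y_\varepsilon = \varepsilon^{-1}$, a direct chain-rule computation gives
$$L^\varepsilon h(y_\varepsilon) = \varepsilon^{-2}(Lh)(y_\varepsilon), \qquad Lh := ah'' + (b-p)h',$$
so the relevant ``cell problem'' lives on the one-dimensional periodic line and is governed by the operator $L$ of \er{4.4}, whose formal adjoint is $L^*$ of \er{4.5}. By Theorem \ref{T2.2} there is a strictly positive $v\in\bS$ with $L^*v=0$ and $\int_0^1 v\,dx=1$.

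With this in hand, I would introduce the corrector
$$\tilde U^\varepsilon(t,x) := g(w) + \varepsilon\,\phi(y_\varepsilon)\,g'(w),\qquad w := c\varepsilon^{-1}t + x,$$
with $\phi\in\bS$ and $c\in\bR$ to be chosen. Applying the chain rule to $\tilde U^\varepsilon$ and collecting terms by powers of $\varepsilon$ yields
$$L^\varepsilon \tilde U^\varepsilon = \varepsilon^{-1}\big[L\phi + b - cp\big](y_\varepsilon)\,g'(w) + G^\varepsilon(t,x),$$
where $G^\varepsilon$ is a finite combination of $g''(w)$ and $g'''(w)$ with coefficients that are products of bounded functions in $\bS$, hence $\sup|G^\varepsilon|\le M$ with $M$ independent of $\varepsilon$. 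To annihilate the singular $\varepsilon^{-1}$ term I need to solve $L\phi = cp - b$ in $\bS$; by Corollary \ref{C2.3} this is possible precisely when $(cp-b,v)=0$, i.e.\ $c = b^0/p^0$. Since $p,v>0$ we have $p^0>0$, so this is exactly the value prescribed in \er{4.12}, and such $\phi\in\bS$ exists. I regard this as the main conceptual point: the drift speed $c$ is not free but is forced on us by the Fredholm solvability condition for the cell equation, and this is precisely what makes the singular term cancel.

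Finally, set $R^\varepsilon := u^\varepsilon - \tilde U^\varepsilon$. Then $L^\varepsilon R^\varepsilon = -G^\varepsilon$ with $|G^\varepsilon|\le M$, while $|R^\varepsilon(0,x)| = \varepsilon|\phi(\varepsilon^{-1}x)g'(x)| \le N_0\varepsilon$. The barrier functions $h^\pm(t,x) := N_0\varepsilon + (M/\nu)\,t \pm R^\varepsilon(t,x)$ satisfy $L^\varepsilon h^\pm \le 0$ (using $p^\varepsilon \ge \nu$) and $h^\pm(0,\cdot)\ge 0$, so the parabolic maximum principle for bounded solutions of the Cauchy problem gives $h^\pm \ge 0$, hence $|R^\varepsilon(t,x)| \le N(\varepsilon+t)$. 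Combining this with the trivial bound $|\tilde U^\varepsilon - U^\varepsilon| \le \varepsilon\sup|\phi|\sup|g'| = O(\varepsilon)$, the triangle inequality yields \er{4.11}.
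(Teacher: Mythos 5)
Your proof is correct and follows essentially the same strategy as the paper: a one-step asymptotic expansion whose singular $\ve^{-1}$ term is removed by solving a periodic cell problem (with the transport speed $c$ forced by the Fredholm solvability condition), followed by a barrier/maximum-principle argument for the remainder. The only cosmetic difference is that you build a single corrector $\phi\in\bS$ solving $L\phi=cp-b$, whereas the paper introduces two correctors $P,B\in\bS$ with $LP=p-p^0$, $LB=b-b^0$ and forms their combination $\ve^2(P^{\ve}U_t^{\ve}-\ve^{-1}B^{\ve}U_x^{\ve})$; since $U_t^{\ve}=c\ve^{-1}g'(w)$ and $U_x^{\ve}=g'(w)$, this is exactly your $\pm\ve(cP-B)(y_\ve)g'(w)$, so the two constructions are the same corrector in different dress (and both are justified by the same Corollary \ref{C2.3} applied to the operator in \eqref{4.4}). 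Your single-$\phi$ formulation makes the Fredholm point --- that $c=b^0/p^0$ is the unique drift killing the $\ve^{-1}$ term --- slightly more transparent, while the paper's $P,B$ formulation packages the corrector in a way that generalizes naturally to several variables. The barrier comparison and the final triangle inequality are exactly as in the paper.
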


\begin{proof}
    Note that $U^{\ve}$ is a solution to the {\em transport equation}
    \begin{equation}\label{4.13}
    L^{0\ve}U^{\ve}:=-p^0U_t^{\ve}+\ve^{-1}b^0U_x^{\ve}=0,\quad t>0;
    \qquad U^{\ve}(0,x)=g(x).
    \end{equation}
    By Corollary \ref{C2.3}, there exist functions $P$ and $B$ in $\bS$ satisfying
    \[ LP=p-p^0,\qquad LB=b-b^0.\]
    Consider the functions
    \begin{equation}\label{4.14}
     w^{\ve}:=u^{\ve}-U^{\ve}
    + \ve^2\,\big(P^{\ve}U_t^{\ve}-\ve^{-1}B^{\ve}U_x^{\ve}\big),
    \quad\ve>0.
    \end{equation}
    where $\,P^{\ve}:=P(y_{\ve}),\; B^{\ve}:=B(y_{\ve})$.
    Similarly to the proof of Theorem \ref{T2.5},
    \[ L^{\ve}w^{\ve}=-L^{\ve}U^{\ve}+I_1+I_2+I_3,\]
    where
    \begin{eqnarray*}
      I_1 &:= & (p^{\ve}-p^0)U_t^{\ve}
      -\ve^{-1}(b^{\ve}-b^0)U_x^{\ve},\\
      I_2 &:= & \ve^{2}\big(P^{\ve}L^{\ve}U_t^{\ve}
      -\ve^{-1}B^{\ve}L^{\ve}U_x^{\ve}\big),\\
      I_3 &:= & 2\ve^2a^{\ve}\big(P_x^{\ve}U_{tx}^{\ve}
      -\ve^{-1}B_x^{\ve}U_{xx}^{\ve}\big).
    \end{eqnarray*}

    Since
    \[ L^{\ve}U^{\ve}
    :=p^{\ve}U_t^{\ve}-(a^{\ve}U_x^{\ve})_x
    =p^{\ve}U_t^{\ve}-a^{\ve}U_{xx}^{\ve}
    -\ve^{-1}b^{\ve}U_x^{\ve},\]
    and $U^{\ve}$ satisfies \er{4.13}, we have
    \[ -L^{\ve}U^{\ve}+I_1
    =-a^{\ve}U_{xx}^{\ve}
    =-a^{\ve}g''(c\,\ve^{-1}t+x),\]
    which is uniformly bounded  with respect to $\ve>0$.
    Moreover, the derivatives $P_x^{\ve}=\ve^{-1}P'(y_{\ve})$ and
    $B_x^{\ve}=\ve^{-1}B'(y_{\ve})$ are of order $\ve^{-1}$. From the explicit expressions for $L^{\ve}$ in \er{4.10} and $U^{\ve}$ in \er{4.12}, it follows that all the terms in $I_2$ and $I_3$ are also uniformly bounded. Hence we have
    \begin{equation}\label{4.15}
        |L^{\ve}w^{\ve}|\le N_1
        \itext{for all}\ve>0,
    \end{equation}
    and by virtue of \er{4.14},
    \begin{equation}\label{4.16}
        |u^{\ve}-U^{\ve}-w^{\ve}|\le N_2\ve
        \itext{for all}\ve>0.
    \end{equation}
    The constants $N_1$ and $N_2$ in these estimates do not depend on $\ve>0$. In particular, $|w^{\ve}(0,x)|\le N_2\ve$.
    \smallskip

    Now we can compare the functions $\pm w^{\ve}$ with $W^{\ve}(t):=N_1\nu^{-1}t+N_2\ve$. Since $\,p\ge \nu>0$, we have
    \[ L(\pm w^{\ve})\ge -N_1\ge -N_1\nu^{-1}p=LW^{\ve},
    \itext{and}
    \pm w^{\ve}(0,x)\le N_2\ve =W^{\ve}(0).\]
    By the maximum principle,
    $\pm w^{\ve}(t,x)\le W^{\ve}(t)$ for all $t\ge 0,\,x\in\bR^1$. Using \er{4.16} once again, we get the desired estimate \er{4.11}:
    \[ |u^{\ve}-U^{\ve}|
    \le |w^{\ve}|+N_2\ve
    \le N_1\nu^{-1}t+2N_2\ve
    \le N\cdot (t+\ve)\]
    with $\,N:=\max\{N_1\nu^{-1}, 2N_2\}$.
\end{proof}

\begin{theorem}\label{T4.2}
    Under assumptions of the previous theorem, suppose that $b^0:=(b,v)\ne 0$. Then the following statements hold true.
    \smallskip

    (I) For arbitrary $Y:=(s,y)\in\bR^2,\,r>0$, and $\delta \in(0,1)$, there are solutions $u^{\ve}$ to the equation $L^{\ve}u^{\ve}=0$ in $C_r(Y)$ satisfying
    \begin{equation}\label{4.17}
    \osc_{C_{\delta r}(Y)}u^{\ve}\ge
    (1-\delta)\cdot \osc_{C_r(Y)}u^{\ve}.
    \end{equation}
    In particular, there is no H\"{o}lder estimate \er{1.4} for $u=u^{\ve}$ with a constant $\alpha>0$ independent on $\ve>0$.
    \smallskip

    (II) For arbitrary $Y:=(s,y)\in\bR^2,\,r>0$, and $\delta \in(0,1)$, there are solutions $u^{\ve}$ to the equation
    $L^{\ve}u^{\ve}=0$ in $C_{2r}(Y)$ satisfying
    $0\le u^{\ve}\le 1$ in $C_{2r}(Y)$,
    \begin{equation}\label{4.18}
    \sup_{C_r(Y)}u^{\ve}\le\delta,\itext{and}
    \sup_{C_r(Y_r)}u^{\ve}\ge 1-\delta,\itext{where}
    Y_r:=(s-2r^2,y).
    \end{equation}
    In particular, there is no Harnack inequality \er{1.2} with a constant $N>1$ independent on $\ve>0$.
\end{theorem}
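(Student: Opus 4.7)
My plan is to construct, for each small $\ve>0$, a solution $u^{\ve}$ of a suitable Cauchy problem for $L^{\ve}u^{\ve}=0$ whose behavior on $C_{2r}(Y)$ mimics the transport wave $U^{\ve}(t,x)=g\bigl((x-y)+c\ve^{-1}(t-t_0)\bigr)$ supplied by Theorem \ref{T4.1}. Set $t_0:=s-4r^2$ for part (II) and $t_0:=s-r^2$ for part (I), and impose Cauchy data at $t=t_0$. After the change of variables $(t,x)\to(t-t_0,x-y)$, the coefficients become $\tl{p}(y):=p(y+y_\ve^*)$, $\tl{a}(y):=a(y+y_\ve^*)$ with $y_\ve^*:=\ve^{-2}t_0+\ve^{-1}y$; these are still smooth $1$-periodic, the translate $\tl{v}(y):=v(y+y_\ve^*)$ satisfies the shifted analogue of \er{4.5}--\er{4.6}, and by $1$-periodicity the constants $\tl{p}^0=p^0$ and $\tl{b}^0=b^0$ are unchanged. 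Hence Theorem \ref{T4.1} applies to the shifted problem with the same speed $c$, yielding $|u^{\ve}(t,x)-U^{\ve}(t,x)|\le N(g)\bigl(\ve+(t-t_0)\bigr)$ on $t\ge t_0$.

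For part (II), I take $g\in C_0^{\8}(\bR)$ with $0\le g\le 1$, $g\equiv 1$ on some interval, chosen so that the $\xi$-image of $C_r(Y_r)$ under the map $(t,x)\mapsto\xi:=(x-y)+c\ve^{-1}(t-t_0)$ meets $\{g=1\}$, while the $\xi$-image of $C_r(Y)$ is disjoint from the support of $g$. Each image interval has $\xi$-length $\approx c\ve^{-1}r^2$ and their centers differ by $2c\ve^{-1}r^2$, so they separate (with a gap growing like $c\ve^{-1}r^2$) as soon as $\ve<cr/2$. Spreading the transitions of $g$ over a length $L$ comparable to $c\ve^{-1}r^2$ gives $\|g'\|_{\8}\le C/L$ and $\|g''\|_{\8}\le C/L^2$; inspecting the bounds on $I_1,I_2,I_3$ in the proof of Theorem \ref{T4.1}, one verifies $N_1=O(\|g''\|_{\8})$ and $N_2=O(\|g'\|_{\8})$, so the error on $C_{2r}(Y)$ is $O(r^2/L^2+\ve/L)=o(1)$ as $\ve\to 0$. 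This forces $\sup_{C_r(Y_r)}u^{\ve}\ge 1-o(1)$ and $\sup_{C_r(Y)}u^{\ve}\le o(1)$, while $0\le u^{\ve}\le 1$ on $C_{2r}(Y)$ follows from the maximum principle applied to the bounded initial datum $g\in[0,1]$.

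For part (I), I take $g\in C_0^{\8}(\bR)$ with $\osc g=1$ and support contained inside the $\xi$-image of the small cylinder $C_{\delta r}(Y)$, whose $\xi$-length is $\approx c\ve^{-1}(\delta r)^2\to\8$. Then $\osc U^{\ve}=1$ on both $C_{\delta r}(Y)$ and $C_r(Y)$; combining $|\osc_Ku^{\ve}-\osc_KU^{\ve}|\le 2\sup_K|u^{\ve}-U^{\ve}|$ with the same $o(1)$ error estimate, both oscillations of $u^{\ve}$ equal $1+o(1)$, so their ratio exceeds $1-\delta$ for $\ve$ sufficiently small. The main obstacle throughout is the explicit $g$-dependence of the constant $N$ in Theorem \ref{T4.1}: one must revisit that proof to verify the homogeneity $N(g)\le C(\|g'\|_{\8}+\|g''\|_{\8})$ with $C$ independent of $g$, which is what allows an $\ve$-dependent choice $g_\ve$ with gentle transitions on the scale $\ve^{-1}$ to overwhelm the geometric scale $r^2$ of the cylinder and drive the approximation error to zero.
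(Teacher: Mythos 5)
Your approach is correct in outline but takes a genuinely different route than the paper. The paper keeps the initial profile $g$ \emph{fixed} once $\delta$ is chosen (a bump with support in $[-\delta,\delta]$, $g(0)=1$, $0\le g\le 1$), solves the Cauchy problem from $t=0$, and exploits the large drift speed $c\ve^{-1}$: at the tiny time $t^{\ve}:=-2\ve/c=O(\ve)$ the transported bump has already moved from $0$ to $2$, so $U^{\ve}(t^{\ve},x)\equiv g(x-2)$. Then the error bound $|u^{\ve}-U^{\ve}|\le N(\ve+t)$ with the \emph{fixed} constant $N=N(g)$ is $O(\ve)$ at time $t^{\ve}$. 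The cylinders used in the proof thus have radius $r\sim 1$ (part (I)) or $r\sim\sqrt\ve\to 0$ (part (II), from $t^{\ve}=2r^2$), and general $Y,r$ are recovered by translation and parabolic rescaling, which simply replaces $\ve$ by $\ve/r$. Crucially, this lets the paper use Theorem \ref{T4.1} as a black box. The paper also extends $u^{\ve}$ by zero for $t\le 0$ to realize it as a solution on the full cylinder $C_1(Y^{\ve})$, a point you skip by moving the Cauchy time to $t_0=s-r^2$ (resp.\ $s-4r^2$) -- that works, but be sure your solution actually covers the whole cylinder including its parabolic bottom.

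Your alternative -- fixing $Y,r$ and letting $g=g_{\ve}$ spread its transitions over a length $L\sim c\ve^{-1}r^2$ -- can indeed be pushed through, but it requires reopening the proof of Theorem \ref{T4.1} to make the $g$-dependence of $N$ explicit, and your stated homogeneity is not quite right. Working through $I_2$ in that proof, $L^{\ve}U_t^{\ve}$ and $L^{\ve}U_x^{\ve}$ both produce $g'''$ terms, so in fact $N_1=O\big(\|g''\|_{\8}\big)+O\big(\ve\|g'''\|_{\8}\big)$, not just $O(\|g''\|_{\8})$. With your scaling $\|g^{(k)}\|_{\8}\sim L^{-k}$ and $L\sim c\ve^{-1}r^2$, the extra term contributes $O(\ve/L^3)\cdot r^2=O(\ve^4/r^4)$, which is subdominant to the $O(r^2/L^2)=O(\ve^2/r^2)$ piece and still vanishes as $\ve\to0$, so the conclusion survives -- but the claim as written is incomplete and must be verified rather than asserted. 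Also, in part (I) the support of $g_{\ve}$ must sit inside the $\xi$-image of $C_{\delta r}(Y)$, whose length is $\sim c\ve^{-1}\delta^2 r^2$, so the transition length must be taken $L\sim c\ve^{-1}\delta^2 r^2$ (with the extra $\delta^2$ factor), not $c\ve^{-1}r^2$; this only affects $\delta$-dependent constants, but should be stated. Finally, neither route works without first arranging the sign of $c$: the paper replaces $x$ by $-x$ to force $b^0<0$ so that $t^{\ve}>0$; you should make the analogous normalization explicit when positioning the $\xi$-images of $C_r(Y_r)$ and $C_r(Y)$. In sum: your plan is workable and more ``stationary'' in the geometry, at the cost of re-deriving quantitative constants inside Theorem \ref{T4.1}; the paper's choice of a fixed $g$, a vanishing time $t^{\ve}$, and a rescaling step avoids that entirely and is the cleaner argument.
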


\begin{proof}
    (I) By rescaling $(t,x)\to (r^{-2}t,r^{-1}x)$, the proof is reduced to the case $r=1$. In addition, replacing $x$ by $-x$ if necessary, we can assume $b^0<0$. If we prove our statement for $Y=Y^{\ve}:=(t^{\ve},2)$ with a convenient choice of $t^{\ve}>0$, then the general case of $Y\in\bR^2$ is covered by parallel translation in $\bR^2$.
    \smallskip

    Fix $\delta \in(0,1)$ and take an arbitrary function $\,g\,$ such that
    \begin{equation}\label{4.19}
    g\in C_0^{\8}(\bR^1),\quad
    0\le g\le 1,\quad
    g(0)=1,
    \itext{and} g(x)\eq 0
    \itext{for} |x|\ge \delta,
    \end{equation}
    and let $u^{\ve}$ be a bounded solution to the problem $\er{4.10}$. The constant $N$ in \er{4.11} depends on the original data $p,a$, and $g$ (which in turn depends on $\delta$), but not on $\ve>0$. The function $U^{\ve}$ in \er{4.12} satisfies

    \begin{equation}\label{4.20}
    U^{\ve}(t^{\ve},x)\eq g(x-2),\itext{where}
    t^{\ve}:=-\frac{2\ve}{c}
    =-\frac{2p^0\ve}{b^0}>0.
    \end{equation}
    From \er{4.11} it follows
    \begin{equation}\label{4.21}
    |u^{\ve}(t^{\ve},x)-g(x-2)|
    =|u^{\ve}(t^{\ve},x)-U^{\ve}(t^{\ve},x)|
    \le N\cdot (\ve+t^{\ve})\le\frac{\delta}{2},
    \end{equation}
    provided $\ve>0$ is small enough. In particular,
    \begin{equation}\label{4.22}
    |u^{\ve}(t^{\ve},2)-1|\le\frac{\delta}{2},\qquad
     |u^{\ve}(t^{\ve},2\pm \delta) |\le\frac{\delta}{2}.
    \end{equation}

    By the maximum principle, $0\le u^{\ve}(t,x)\le 1$ for all $t>0,\,x\in\bR^1$. Moreover, by standard extension $u^{\ve}(t,x)\eq 0$ on $C_1(Y^{\ve})\cap \{t\le 0\}$, we get a classical solution of $L^{\ve}u^{\ve}=0$ in $C_1(Y^{\ve}),\;Y^{\ve}:=(t^{\ve},2)$. Indeed, if $\tl{u}_{\ve}$ is a classical solution of $L^{\ve}\tl{u}_{\ve}=0$ in $C_1(Y^{\ve})$ with the given data on the parabolic boundary of $C_1(Y^{\ve})$:
    \begin{eqnarray*}
    \tl{u}_{\ve}\eq u^{\ve} &&
    \text{on}\quad (0,t^{\ve})\times \{1,3\},\\
    \tl{u}_{\ve}\eq 0 &&
    \text{on}\quad
    \big((t^{\ve}-1,0]\times\{1,3\}\big)\cup
    \big(\{t^{\ve}-1\}\times [1,3]\big),
    \end{eqnarray*}
    then by uniqueness, we must have
    \[ \tl{u}_{\ve}\eq 0\itext{on}
    C_1(Y^{\ve})\cap \{t\le 0\}, \itext{and}
    \tl{u}_{\ve}\eq u^{\ve}\itext{on}
    C_1(Y^{\ve})\cap \{t>0\}.\]

    Finally, from \er{4.22} it follows
    \begin{equation}\label{4.23}
    \sup_{C_{\delta}(Y^{\ve})} u^{\ve}
    \ge u^{\ve}(Y^{\ve})\ge 1-\frac{\delta}{2},\qquad
    \inf_{C_{\delta}(Y^{\ve})} u^{\ve}
    \le u^{\ve}(t^{\ve},2\pm\delta)\le \frac{\delta}{2},
    \end{equation}
    and we get the property \er{4.17}:
    \[\osc_{C_{\delta}(Y^{\ve})} u^{\ve}
    \ge 1-\delta
    \ge (1-\delta)\cdot
    \osc_{C_1(Y^{\ve})} u^{\ve},\]

    (II) Fix a constant $\delta \in (0,1)$. It suffices to find a particular combination of $Y:=(s,y),\,r>0$, and $u^{\ve}$ with $\ve>0$, which satisfies \er{4.18}; then the case of general $Y\in \bR^2$ and $r>0$ follows by translation and rescaling argument as in the proof of the previous part (I).
    \smallskip

    Using the previous construction, fix a function $g$ in \er{4.19} and $u^{\ve}$ satisfying \er{4.10}, where $\ve>0$ is small enough to guarantee the estimate \er{4.21}. Then choose $r>0$ from the equality $t^{\ve}=2r^2$, and set $s:=2t^{\ve}=4r^2,\,Y:=(s,2)$. Obviously, we can assume that both $\delta$ and $r$ belong to $(0,1/2)$. By the choice of $t^{\ve}$ in \er{4.20}, we have $c\ve^{-1}r^2=-1$. Hence
    \[ c\ve^{-1}t+x<-\frac{1}{2}\itext{for every}
    X:=(t,x)\in C_r(Y):=(3r^2,4r^2)\times (2-r,2+r),\]
    so that $U^{\ve}=0$ on $C_r(Y)$, and by \er{4.11}, \er{4.21},
    \[ u^{\ve}(t,x)=u^{\ve}(t,x)-U^{\ve}(t,x)
    \le N\cdot (\ve+t)\le N\cdot (\ve+2t^{\ve})
    \le\delta \itext{on} C_r(Y),\]
    i.e. we get the first estimate in \er{4.18}. The second estimate is contained in \er{4.23}, because $Y_r:=(s-2r^2,2)=(t^{\ve},2)=:Y^{\ve}$:
    \[ \sup_{C_r(Y_r)}u^{\ve}
    \ge u^{\ve}(Y_r)=u^{\ve}(Y^{\ve})
    \ge 1-\frac{\delta}{2}>1-\delta .\]
    Theorem is proved.
\end{proof}

\end{document}